\def\NN{\mathbb{N}}
\newcommand{\cS}{\mathcal{S}}
\newcommand{\cB}{\mathcal{B}}
\newcommand{\cI}{\mathcal{I}}
\newcommand{\cJ}{\mathcal{J}}
\newcommand{\cA}{\mathcal{A}}
\newcommand{\cN}{\mathcal{N}}
\newcommand{\cM}{\mathcal{M}}
\newcommand{\cC}{\mathcal{C}}
\newcommand{\cK}{\mathcal{K}}
\newcommand{\cH}{\mathcal{H}}
\newcommand{\cE}{\mathcal{E}}
\newcommand{\stars}{\mathcal{Z}}
\newcommand{\configs}{\mathfrak{B}}
 \newcommand{\vertex}[2]{\node[circle, line width=0.7pt, inner sep=#1, draw=#2, fill=#2, minimum size=.1cm]}
   \newcommand{\svertex}[2]{\node[circle, line width=0.3pt, inner sep=#1, draw=#2, fill=#2, minimum size=.03cm]}
 \newcommand{\drawPair}[3]{\draw[#3, thick](#1) -- (#2)}
\theoremstyle{theorem}
\newtheorem{theorem}{\textbf{Theorem}}[section]
\newtheorem{corollary}[theorem]{\textbf{Corollary}}
\newtheorem{lemma}[theorem]{\textbf{Lemma}}
\newtheorem{claim}[theorem]{\textbf{Claim}}
\newtheorem{proposition}[theorem]{\textbf{Proposition}}
\theoremstyle{definition}
\newtheorem{definition}[theorem]{\textbf{Definition}}
\theoremstyle{fact}
\definecolor{darkgreen}{rgb}{0.0, 0.7, 0.0}
\newcommand{\extset}{\mathrm{Ext}}
\newcommand{\drawThreeHyperedge}[5]{\draw[#4, line width=1pt]\convexpath{#1,#2,#3}{#5}}
\newcommand{\drawFourHyperedge}[6]{\draw[#5, line width=1pt]\convexpath{#1,#2,#3,#4}{#6}}
\newcommand{\convexpath}[2]{
   [   
   create hullcoords/.code={
     \global\edef\namelist{#1}
     \foreach [count=\counter] \nodename in \namelist {
       \global\edef\numberofnodes{\counter}
       \coordinate (hullcoord\counter) at (\nodename);
     }
     \coordinate (hullcoord0) at (hullcoord\numberofnodes);
     \pgfmathtruncatemacro\lastnumber{\numberofnodes+1}
     \coordinate (hullcoord\lastnumber) at (hullcoord1);
   },
   create hullcoords
   ]
   ($(hullcoord1)!#2!-90:(hullcoord0)$)
   \foreach [
   evaluate=\currentnode as \previousnode using \currentnode-1,
   evaluate=\currentnode as \nextnode using \currentnode+1
   ] \currentnode in {1,...,\numberofnodes} {
     let \p1 = ($(hullcoord\currentnode) - (hullcoord\previousnode)$),
     \n1 = {atan2(\y1,\x1) + 90},
     \p2 = ($(hullcoord\nextnode) - (hullcoord\currentnode)$),
     \n2 = {atan2(\y2,\x2) + 90},
     \n{delta} = {Mod(\n2-\n1,360) - 360}
     in 
     {arc [start angle=\n1, delta angle=\n{delta}, radius=#2]}
     -- ($(hullcoord\nextnode)!#2!-90:(hullcoord\currentnode)$) 
                                            }
}
\begin{document}

\title[High-order bootstrap percolation in hypergraphs]{High-order bootstrap percolation in hypergraphs}

\author[O. Cooley, J. Zalla]{Oliver Cooley$^{*}$,
Julian Zalla$^{*}$}

\renewcommand{\thefootnote}{\fnsymbol{footnote}}

\footnotetext[1]{Supported by Austrian Science Fund (FWF): I3747, W1230, \texttt{\{cooley,zalla\}@math.tugraz.at},
Institute of Discrete Mathematics, Graz University of Technology, Steyrergasse 30, 8010 Graz, Austria.}

\renewcommand{\thefootnote}{\arabic{footnote}}
\begin{abstract}
Motivated by the bootstrap percolation process for graphs, we define
a new, high-order generalisation to $k$-uniform hypergraphs,
in which we infect $j$-sets of vertices for some integer $1\le j \le k-1$.
We investigate the smallest possible size of an initially infected set which ultimately percolates
and determine the exact size in almost all cases of $k$ and $j$. 
\end{abstract}

\maketitle

\section{Introduction and Main results} \label{sec:intro}

\subsection{Motivation}
Bootstrap percolation was first proposed by Chalupa, Leath and Reich~\cite{ChalupaLeathReich79} in 1979 to study diluted magnetic systems using a special kind of lattice. 
Since then it has been successfully used to describe a plethora of phenomena ranging from virus infections in human populations~\cite{DreyerRoberts09} to living neural networks~\cite{Amini10}, 
belief propagation in social networks~\cite{NguyenZheng12, HelbingBook} and many more. 

Informally, given a graph $G$, a natural number $r$ and a set $A$ of initially infected vertices, in each step of the bootstrap percolation process
(also known as $r$-neighbour bootstrap percolation)
we infect previously uninfected vertices that are incident to at least $r$ currently infected vertices.
We call $r$ the \emph{infection threshold} and note that once a vertex becomes infected,
it stays infected forever. If there exists a time at which all vertices are infected, we call the initial set $A$ \emph{contagious}.
This may be seen as a generalisation of the notion of connectedness, since for $r=1$ the final infected set contains
simply those vertices in the components of the initially infected set.

The graph $G$ may be deterministic, such as
the $d$-dimensional grid $[n]^d$ (see~\cite{AizenmannLebowitz88,BaloghBollobasMorris09,BBDCM10,BaloghBollobasDuminilCopinMorris12}),
or infinite trees~\cite{BaloghPeresPete06,BGHJP14}.
Alternatively, $G$ may be chosen randomly, for example from the $G(n,p)$ or $G(n,m)$ models
(see~\cite{JansonLuczakTurovaVallier12,FeigeKrivelevichReichman17,AngelKolesnik18}), or random graphs with a given degree sequence~\cite{Janson09}.
Typical questions include how large the size of the final infected set is or how many steps the process runs for until completion.
The answers to these questions are in general heavily dependent on the choice of the initially infected set $A$,
whose vertices may be selected independently at random or, in a more recent approach,
one can try to find the smallest contagious set $A$ in $G$~\cite{FeigeKrivelevichReichman17,AngelKolesnik18}.
A closely related process is graph bootstrap percolation~\cite{Bollobas68,BaloghBollobasDuminilCopinMorris12,AngelKolesnik18},
originally proposed by Bollob{\'a}s in $1968$.

Bootstrap percolation has also been studied in (random) $k$-uniform hypergraphs in~\cite{KangKochMakai17}, 
where an infection process on the \emph{vertices} of a random hypergraph was studied;
by contrast,
inspired by recent work on high-order connectedness and percolation processes in hypergraphs (e.g., \cite{BCKK17,CooleyKangKoch16,CooleyKangKoch18}), 
which evolve on \emph{sets of vertices} rather than the vertices themselves,
for any $1\le j\le k-1$ we introduce a process on the \emph{$j$-sets} (of vertices) of the $k$-uniform hypergraph, which has not been considered previously in the literature.

\subsection{$j$-set bootstrap percolation process}
In this section we introduce
a natural generalisation of the standard graph process to $k$-uniform hypergraphs
which evolves on the $j$-sets of vertices, where $1 \le j \le k-1$. For an integer $m\in \NN$ and a set $S$,
we denote $[m]\coloneqq \{1,2,\ldots,m\}$ and write $\binom{S}{m}$ for the set of $m$-element subsets of $S$.

\begin{definition}\label{def:infection}
Given integers $j,k,r,n \in \NN$ with $1\le j \le k-1$, 
a $k$-uniform hypergraph $\cH$ on vertex set $[n]$ with edge set $\cE \subset \binom{[n]}{k}$
and $\cA_0\subset\binom{[n]}{j}$, for each $t \in \NN$, we recursively define
$\cA_t\coloneqq \cA_{t-1}\cup \cN_t$, where $\cN_t$ is the set of all $J\in\binom{[n]}{j}\setminus \cA_{t-1}$ such that the following holds:
\begin{enumerate}[(\textasteriskcentered)]
\item There exist distinct edges $K_1,\ldots,K_r\in \cE$ and distinct $j$-sets $J_1,\ldots,J_r\in \cA_{t-1}$ with $(J_i\cup J)\subset K_i$ for all $i\in[r]$.
\end{enumerate}
\end{definition}
We call $\cA_0$ the initially infected set and $\cA_t$ the set of infected $j$-sets at time $t$.
We refer to this procedure as the \emph{$(r,\cA_0)$-infection process} where $r$ is the \emph{infection threshold}. Note that the hypergraph $\cH$ 
and the parameters $k$ and $j$ are implicit in this notation and will always be clear from the context.
Note also that if $k=2$ and $j=1$, this is identical to the standard bootstrap percolation process for graphs.
The only case which has been previously studied for hypergraphs is the case $j=1$~\cite{KangKochMakai17}.

Once a $j$-set becomes infected it will stay infected forever and we define $\tau\coloneqq\min\{t\in\NN:\cA_t=\cA_{t-1}\}$ to be the earliest time 
when the process does not infect any new $j$-sets. We note that $\tau$ exists because the sets $A_t$ are monotonically increasing with $t$. 
We say $\cA_0\subset\binom{[n]}{j}$ is \emph{contagious} if the final infected set is $\cA_{\tau}=\binom{[n]}{j}.$ 

For any $i \in \NN$, we call a set of $i$-sets an \emph{$i$-configuration} (so in particular, the sets $A_t$ above are $j$-configurations).

\subsection{Main results}

The main goal of this paper is to initiate the study of $j$-set bootstrap percolation by considering perhaps the most natural and fundamental question of all:
What is the size and structure of the smallest possible contagious initial $j$-configuration?
Therefore we will focus on the complete $k$-uniform hypergraph in this paper -- denote by $\cK_n^k$ the complete $k$-uniform hypergraph on $[n]$.

\begin{definition}
Let $\ell_n(k,j,r)$ denote the minimum size
(i.e., number of $j$-sets) over all contagious $j$-configurations in $\cK_n^k$ with infection threshold $r$.
\end{definition}

We note that for $j=1$, i.e., for all cases previously studied, trivially any set of $r$ vertices will percolate just one step,
so $\ell_n(k,1,r)=r$ (except for some degenerate cases when $n$ is too small).
Indeed, this also holds for all $j\le k/2$. Thus it is only when $j$ becomes larger that this problem becomes interesting,
a typical example of the richer phenomena exhibited by high-order structures.
Our main result is the following.

\begin{theorem}\label{thm:maintheorem}
Let $k,j,r\in\NN$. There exists $n_0\coloneqq n_0(k,j,r)\in\NN$ such that for all $n\ge n_0$ we have the following.
\begin{enumerate}[(i)]
\item    \label{item:smallj}  If $1\le j< k-1$, \hspace{0.5cm}  $\ell_n(k,j,r) =r $;
\vspace{0.2cm}
\item    \label{item:exactj} \hspace{3.43cm} $\ell_n(3,2,r)  =\frac{1}{4}\left((r+1)^2-\mathbbm{1}_{r\in 2\NN}\right)$;
\vspace{0.2cm}
\item    \label{item:largej} 
    if $3\le j=k-1$, \hspace{0.55cm} $\ell_n(k,j,r)  \le \sum_{i=1}^{r}\ell_n(k-1,j-1,i)$.
\end{enumerate}
\end{theorem}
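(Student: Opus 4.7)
My plan is to handle the three parts separately, as they live in very different regimes of $j$ versus $k$. For part~(\ref{item:smallj}), the lower bound $\ell_n(k,j,r)\ge r$ is immediate from Definition~\ref{def:infection}. For the matching upper bound I exhibit a contagious $\cA_0$ of size $r$. If $2j\le k$, any $r$ pairwise-disjoint $j$-sets work: for each target $J$ and each initial $J_i$ we have $|J_i\cup J|\le 2j\le k$, leaving enough distinct $k$-edges $K_i\supset J_i\cup J$ for $n$ large. If $k/2<j\le k-2$, I use the ``pencil'' $\cA_0=\{S_0\cup\{u_i\}:i\in[r]\}$ with a fixed $(j-1)$-set $S_0$ and $r$ distinct $u_i\notin S_0$; this infects every $j$-set containing $S_0$ at time~$1$, since $|J_i\cup J|=j+1\le k-1$, and the slack $k-j\ge 2$ then drives a short induction on $j-|J\cap S_0|$ showing every $j$-set is eventually infected.

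For part~(\ref{item:largej}), I build $\cA_0$ in $r$ layers. Fix distinct pivots $v_1^*,\dots,v_r^*\in[n]$, and for each $i\in[r]$ let $\cA^{(i)}\subset\binom{[n]\setminus\{v_1^*,\dots,v_r^*\}}{j-1}$ be a minimum contagious configuration of size $\ell_n(k-1,j-1,i)$ for the $(i,\cA^{(i)})$-infection process on the complete $(k-1)$-uniform hypergraph on $[n]\setminus\{v_1^*,\dots,v_r^*\}$. Set $\cA_0:=\bigsqcup_{i=1}^{r}\{T\cup\{v_i^*\}:T\in\cA^{(i)}\}$, of total size $\sum_{i=1}^{r}\ell_n(k-1,j-1,i)$. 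The key structural fact is that because $j=k-1$ forces $K=J\cup J'$ and $|J\cap J'|=j-1$ for every infecting pair $(J',K)$, the process restricted to the ``star'' $\cS_{v^*}:=\{S\cup\{v^*\}:S\in\binom{[n]\setminus\{v^*\}}{j-1}\}$ is, via $T\leftrightarrow T\cup\{v^*\}$, identical to the $(j-1)$-set infection on $\cK_{n-1}^{k-1}$. I run the phases in decreasing sub-threshold order: $\cA^{(r)}$ (sub-threshold $r$) first, down to $\cA^{(1)}$ (sub-threshold $1$) last. After the $r-i$ preceding phases complete, each target $J\in\cS_{v_i^*}$ picks up $r-i$ distinct external witnesses (one per direction $w=v_{i'}^*$ with $i'>i$), reducing the effective sub-threshold inside $\cS_{v_i^*}$ to $i$, which $\cA^{(i)}$ was designed to handle. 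Once every star is saturated, any $j$-set disjoint from $\{v_1^*,\dots,v_r^*\}$ collects one witness from each of the $r$ stars in distinct directions and infects in one further step.

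For part~(\ref{item:exactj}), the upper bound comes from $\cA_0=E(K_A)\sqcup E(K_B)$ for disjoint $A,B\subset[n]$ of sizes $\lceil(r+2)/2\rceil$ and $\lfloor(r+2)/2\rfloor$, giving exactly $\binom{|A|}{2}+\binom{|B|}{2}=((r+1)^2-\mathbbm{1}_{r\in2\NN})/4$ edges; each cross edge $\{a,b\}$ has precisely $(|A|-1)+(|B|-1)=r$ witnesses and infects at step~$1$, after which two more steps finish. The lower bound is the main obstacle. Given any contagious $\cA_0$, I locate the first moment an infected edge $\{a,b\}$ bridges two distinct components $C_a,C_b$ of $G_0:=([n],\cA_0)$ (a short separate argument handles the connected case). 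Since prior infections are within-component, the $r$ witnesses for $\{a,b\}$ all lie in $(C_a\cup C_b)\setminus\{a,b\}$, so $|C_a|+|C_b|\ge r+2$. The core of the proof is then a degree-counting lemma showing that the full cascade of subsequent cross edges forces the minimum degrees of the within-component saturations $X_a,X_b$ to satisfy $\delta_{X_a}+\delta_{X_b}\ge r$, and hence $|\cA_0\cap E(C_a)|+|\cA_0\cap E(C_b)|\ge\binom{|C_a|}{2}+\binom{|C_b|}{2}$. Combining with $\min_{s+t\ge r+2}\binom{s}{2}+\binom{t}{2}=\lfloor(r+1)^2/4\rfloor$ (attained at $s,t$ as balanced as possible) gives the claimed bound. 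The delicate point of the lemma is that partial within-component saturation could in principle be boosted by witnesses arriving from cross edges at later times, and ruling this out requires careful tracking of the infection order.
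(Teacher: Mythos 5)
Parts~(\ref{item:smallj}) and~(\ref{item:largej}) of your proposal follow essentially the same route as the paper. Your ``pencil'' for $k/2 < j \le k-2$ is exactly the paper's $(j-1,j)$-star, and the induction on $j-|J\cap S_0|$ is the paper's joker-transfer argument (Propositions~\ref{prop:getfullstar} and~\ref{prop:jokertransfer}, Corollary~\ref{cor:jokertransfer}). For part~(\ref{item:largej}), your pivot-augmentation construction and phase ordering match the paper's Definition~\ref{def:augmentation}, Lemma~\ref{lem:inclusionpropf} and Proposition~\ref{prop:mastertojoker}; the ``effective sub-threshold'' reduction after a pivot saturates is precisely Lemma~\ref{lem:reductionlemma}. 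For the upper bound in~(\ref{item:exactj}), your two-clique construction $E(K_A)\sqcup E(K_B)$ with $|A|+|B|=r+2$ differs from the paper's $\stars_r$ (a disjoint union of $r$ stars of sizes $1,1,2,2,\dots$), but it has the right size and does percolate, so this is a legitimate alternative construction.

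Your lower-bound argument for part~(\ref{item:exactj}) has a genuine gap. You locate the first cross-component pair $\{a,b\}$, correctly deduce $|C_a|+|C_b|\ge r+2$, and then claim a degree-counting lemma yielding $\delta_{X_a}+\delta_{X_b}\ge r$ and hence $|\cA_0\cap E(C_a)|+|\cA_0\cap E(C_b)|\ge\binom{|C_a|}{2}+\binom{|C_b|}{2}$. That last inequality forces $\cA_0$ to contain the complete graphs on $C_a$ and $C_b$, and it is false for extremal configurations: the paper's own $\stars_r$ is a counterexample. Take $r=5$: the components of $\stars_5$ have $2,2,3,3,4$ vertices, the first infected pair joins the two centres of highest degree (bridging the $4$-vertex and a $3$-vertex component), so $|C_a|+|C_b|=7=r+2$, yet $|\cA_0\cap E(C_a)|+|\cA_0\cap E(C_b)|=3+2=5$ while $\binom{4}{2}+\binom{3}{2}=9$. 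Moreover, since $|C_a|,|C_b|\le r+1$, no pair inside a single component can collect $r$ extensions before the bridging, so the within-component saturations are just $\cA_0\cap E(C_a)$ and $\cA_0\cap E(C_b)$, each a star with minimum degree $1$, so $\delta_{X_a}+\delta_{X_b}=2$, not $\ge r=5$.

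The underlying difficulty is that an optimal $\cA_0$ may consist of many small components, so restricting attention to a single bridging pair and two components cannot capture enough of $\cA_0$. The paper's argument avoids any structural assumption on $\cA_0$: it tracks a matching $P_1,P_2,\dots$ where $P_i$ is the first infected pair disjoint from $W_{i-1}=V(P_1)\cup\dots\cup V(P_{i-1})$, shows $|\extset_{\cA_0}(P_i)\setminus W_{i-1}|\ge r-2(i-1)$ (Proposition~\ref{prop:extensions}) because earlier infections are all incident to $W_{i-1}$, and sums these disjoint contributions over $i\le\lceil r/2\rceil$ to reach $\tfrac14\bigl((r+1)^2-\mathbbm{1}_{r\in 2\NN}\bigr)$. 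You would need to replace your two-component bridging lemma with something of this sequential, matching-based flavour for the lower bound to go through.
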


Note that while we have not given an explicit value for $n_0$ since this would involve some annoying technical considerations,
one could check that the conditions we need are $n_0\ge \max\{k+r-1,2r+1\}$, and also
that $n_0$ is large enough that the problem is not degenerate because $\binom{n}{j}$ is smaller than the value given for $\ell$.

Observe that by recursively applying~\ref{item:largej} and substituting in~\ref{item:exactj},
we can obtain an explicit upper bound on $\ell_n(k,j,r)$ for the case when $3\le j=k-1$,
which is a polynomial in $r$ of degree $k-1$. We present and discuss this upper bound explicitly
in Section~\ref{sec:remarks}, where we also observe that in general this upper bound is
not best possible, even up to a constant factor.

\subsection{Key proof techniques}
An essential tool in our arguments is the idea of \emph{reduction}, in which we delete certain vertices
and reduce to a similar (and in some cases equivalent) infection process with a lower infection threshold (Lemma~\ref{lem:reductionlemma}).
Another tool that is crucial to derive the recursive inequality in~\ref{item:largej}
is the \emph{augmentation} of a configuration (see Definition~\ref{def:augmentation})
in which a vertex is added to each set of the configuration; this may be viewed as the reverse
operation of considering the ``link hypergraph'' of a vertex, a common feature in hypergraph theory.

\subsection{Paper overview}
The remainder of the paper is organised as follows.

In Section~\ref{sec:prelim} we introduce
some preliminary concepts and basic results that will be used in the remainder of the paper.
In Section~\ref{sec:nontight} we
prove the first statement of Theorem~\ref{thm:maintheorem} (the \emph{non-tight} case, when $j \le k-2$).

Subsequently, in Section~\ref{sec:prelimtight}
we collect some further auxiliary results that are needed specifically for the tight cases when $j=k-1$.
In Section~\ref{sec:specialtight}, we then prove~\ref{item:exactj}, i.e., the second statement of Theorem~\ref{thm:maintheorem},
while in 
Section~\ref{sec:generaltight} we prove \ref{item:largej}.
For clarity, in Section~\ref{sec:proofofmainthm} we complete the proof of Theorem~\ref{thm:maintheorem}
by indicating precisely where each case appears as a statement we have previously proved.
We conclude the paper in Section~\ref{sec:remarks} with some open questions, in particular
observing that the bound in~\ref{item:largej} is not tight in general.

\section{Preliminaries}\label{sec:prelim}
Throughout this paper we fix $j,k\in\NN$ with $j\le k-1$ and will implicitly assume that $n\ge n_0(k,j,r)$ is large enough. Let $\cH$ be an $s$-uniform hypergraph on $[n]$ and let $R\subset [n]$. 
Then we define $\cH-R$ to be the $s$-uniform hypergraph
induced by $\cH$ on $[n]\backslash R$, i.e., with vertex set $[n]\setminus R$ and whose edges are precisely those
edges of $\cH$ which do not contain a vertex of $R$.
An $i$-configuration $\cI$ may also be the edge set of an $i$-uniform hypergraph, and we often
identify an $i$-configuration with the corresponding hypergraph (with the smallest possible vertex set).
In particular, we will use the $\cI-R$ notation for $i$-configurations.

Whenever we talk of a percolation process, the underlying hypergraph will 
always be $\cH=\cK_n^k$ unless otherwise is specified.

In our arguments, it will be particularly convenient to keep track of those parts of the hypergraph where everything possible
has already been infected, which motivates the following definitions. 

\begin{definition}\label{def:jokers}
Given two sets $S,V$ of vertices with $S\subset V$, we define
\begin{align*}
S^* & \coloneqq \left\{J\in\binom{V}{j}:S\subset J\right\}.
\end{align*}
If $S=\{v\}$ is a singleton we will write $v^*\coloneqq\{v\}^*$ for ease of notation.
\end{definition}
In other words, $S^*$ is the $j$-configuration consisting of every $j$-set which contains $S$.
Note that $V$ is implicit in this notation, but will always be clear from the context.
\begin{definition}
Let $S$ be a set of $s<j$ vertices and let $\cJ$ be a $j$-configuration.
We say $S$ is a \emph{joker $s$-set for $\cJ$} if $S^* \subset \cJ$. In particular,
if $s=1$ we call the vertex of $S$ a \emph{joker vertex} (or simply \emph{joker}) for $\cJ$.
If $\cJ=\cA_t$ and $\cA_t$ is clear from the context we simply say that $S$ is a \emph{joker $s$-set at time $t$}.
\end{definition}

If we have $s$ joker vertices, they can always be used in the percolation process,
effectively reducing the analysis of the remaining process to one with infection threshold $r-s$ in the remaining hypergraph.
The following lemma makes this precise.

\begin{lemma}\label{lem:reductionlemma}
Let $\cB_0$ be a $j$-configuration, let $S=\{v_1,\ldots,v_s\}$ be a set of $s$ jokers for $\cB_0$ and let
$\cC_0\coloneqq \cB_0- S$. Let $\cB_t$ be the set of infected $j$-sets at time $t$ in the $(r,\cB_0)$-infection process in $\cK_n^k$ and 
let $\cC_t$ be the set of infected $j$-sets at time $t$ in the $(r-s,\cC_0)$-infection process in $\cK_n^k-S$.
Then $\cC_t\subset \cB_t- S$ for all $t\in\NN$.

Furthermore, if $j=k-1$ then in fact $\cC_t=\cB_t- S$.
\end{lemma}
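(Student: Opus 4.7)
The plan is to induct on $t$ in both directions, with base case $\cC_0 = \cB_0 - S$ immediate from the definitions. The guiding intuition is that each of the $s$ joker vertices acts as a \emph{free witness} in the original process, so the reduced process with threshold $r-s$ supplies $r-s$ witnesses and the jokers supply the remaining $s$.

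\smallskip

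\textbf{Forward inclusion.} For $J \in \cC_t \setminus \cC_{t-1}$, applying ($\ast$) to the reduced process in $\cK_n^k - S$ produces witness pairs $(J_1,K_1),\ldots,(J_{r-s},K_{r-s})$ with $J_l \in \cC_{t-1}$; the inductive hypothesis places each $J_l$ in $\cB_{t-1}$. For each joker $v_i$ I pick a distinct $x_i \in J$ and set $J'_i := (J \setminus \{x_i\}) \cup \{v_i\}$ together with an edge $K'_i \supset J \cup \{v_i\}$; when $j = k-1$ this forces $K'_i = J \cup \{v_i\}$, otherwise $K'_i$ is padded with fresh vertices from $[n] \setminus (J \cup S)$. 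Since $v_i$ is a joker, $J'_i \in v_i^* \subset \cB_0 \subset \cB_{t-1}$. The new $J'_i, K'_i$ meet $S$ precisely in $\{v_i\}$ while the original $J_l, K_l$ lie in $[n] \setminus S$, giving all the required distinctness. Hence $J$ satisfies ($\ast$) with threshold $r$ in the original process, so $J \in \cB_t$.

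\smallskip

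\textbf{Reverse inclusion when $j = k-1$.} For $J \in \cB_t - S$, the case $J \in \cB_{t-1}$ is handled by the inductive hypothesis; otherwise witnesses $(J_1,K_1),\ldots,(J_r,K_r)$ exist. Since $|K_i| = j+1$ and $J_i \cup J \subset K_i$ with $J_i \ne J$, the edge $K_i$ equals $J \cup J_i$, so $K_i = J \cup \{x_i\}$ for the unique vertex $x_i \in J_i \setminus J$; distinctness of the $K_i$ forces distinctness of the $x_i$. Therefore at most $s$ of the $x_i$ lie in $S$, leaving at least $r-s$ pairs with $K_i \subset [n] \setminus S$, and hence $J_i \subset [n] \setminus S$ as well. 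The inductive hypothesis then places the corresponding $J_i$ in $\cC_{t-1}$, and these $r-s$ pairs witness $J \in \cC_t$.

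\smallskip

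The crucial step---and the reason equality requires $j = k-1$---is the uniqueness statement ``each lost joker destroys exactly one witness'', which depends on $|K_i| = j+1$ forcing $K_i = J \cup J_i$. When $j < k-1$, a single witness edge can meet $S$ in several vertices without being reconstructible in the reduced process, so equality can genuinely fail. The remaining housekeeping---choosing distinct padding vertices in the forward step and verifying that $n$ is large enough to supply them---is routine.
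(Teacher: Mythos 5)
Your proof is correct and follows essentially the same inductive coupling argument as the paper in both directions, including the key observation that $j=k-1$ forces $|K_i\setminus J|=1$ so that each vertex of $S$ can destroy at most one witness. The one cosmetic difference is in the forward step: you build the extra witnesses as $J'_i=(J\setminus\{x_i\})\cup\{v_i\}$ with \emph{distinct} $x_i\in J$, but the distinctness of the $x_i$ (and hence the implicit constraint $s\le j$) is unnecessary since the $J'_i$ are already distinguished by $J'_i\cap S=\{v_i\}$; the paper instead fixes a single $(j-1)$-set $J'\subset J$ and a single pad $Q$ and uses $J_i=J'\cup\{v_i\}$, $K_i=J\cup Q\cup\{v_i\}$, which avoids even the appearance of needing $s\le j$.
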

\begin{proof}
We prove the statement by induction on $t$. For $t=0$ we have $\cC_0=\cB_0- S$ by definition, so suppose the statement holds for $t-1$. 
Let $\cN_t\coloneqq \cC_t\backslash \cC_{t-1}$ and $\cM_t\coloneqq \cB_t\backslash \cB_{t-1}$. By our induction hypothesis we have that 
\begin{equation*}
  \cC_t=(\cC_{t-1}\cup \cN_t)\subset \big((\cB_{t-1}- S)\cup \cN_t\big)
\end{equation*}
and we aim to show that this is a subset of $\cB_t-S = (\cB_{t-1}\cup \cM_t)-S$.
It therefore suffices to show that 
\[
\cN_t\subset(\cB_{t-1}\cup \cM_t)-S.
\]

Let $J\in \cN_t$. Note that because the $j$-sets of $\cN_t$ are
infected in a process within $\cK_n^k -S$, we certainly have $J \cap S = \emptyset$, so it suffices to show that $J \in \cB_{t-1}\cup \cM_t$.
Suppose that $J\not\in \cB_{t-1}$, otherwise we are done. 
Let $K_1,\ldots,K_{r-s}$ and $J_1,\ldots,J_{r-s}\in \cC_{t-1}$ be the edges and $j$-sets which cause $J$ to become infected in $\cK_n^k-S$.
Let us fix a $(k-j-1)$-set $Q\subset V\backslash(J\cup S)$ and a $(j-1)$-set $J'\subset J$. We set $K_i\coloneqq J\cup Q\cup \{v_{i-r+s}\}$
and $J_i=\{v_{i-r+s}\}\cup J'$ for $r-s+1\le i\le r$ (see Figure~\ref{fig:KiConstruction}).
We claim that the $k$-sets $K_1,\ldots,K_r$ and $j$-sets $J_1,\ldots,J_r$ cause $J$ to become infected in $\cK_n^k$.

\begin{center}
\begin{figure}[h]
\begin{tikzpicture}[scale=1]
\vertex{.005cm}{black} at (3.7,3) (33) {};
\vertex{.005cm}{black} at (4.7,3) (43) {};
\vertex{.005cm}{black} at (3.3,4) (3345) {};
\vertex{.005cm}{black} at (3.8,4) (3845) {};
\vertex{.005cm}{black} at (4.3,4) (4345) {};
\vertex{.005cm}{black} at (4.8,4) (4845) {};
\vertex{.005cm}{black} at (4,5.7) (46) {};
\foreach \x in {4,...,10}
\svertex{.0005cm}{black} at (\x/3,5.7) {};
\foreach \x in {14,...,20}
\svertex{.0005cm}{black} at (\x/3,5.7) {};
\vertex{.005cm}{black} at (4/3,5.7) {};
\node at (4/3,6.1) {\small \color{black} $v_1$};
\vertex{.005cm}{black} at (20/3,5.7) {};
\node at (20/3,6.1) {\small \color{black} $v_s$};
\draw (4,5.7) ellipse (3cm and 0.35cm);
\draw (4.1,4) ellipse (1.2cm and 0.43cm);
\draw (4.2,3) ellipse (0.8cm and 0.3cm);
\drawThreeHyperedge{3845}{46}{4845}{black}{.2cm};
\draw [rounded corners=13pt]
(2,3)--(2,5.2)--(3.2,5.2)--(4,6.2)--(4.8,5.2)--(6,5.2)--(6,2.5)--(2,2.5)--(2,3);
\node at (4,6.2) {\small \color{black} $v_{i-r+s}$};
\node at (0.6,5.6) {\small \color{black} $S$};
\node at (4.9,4.8) {\small \color{black} $J_i$};
\node at (2.5,4) {\small \color{black} $J$};
\node at (3,3) {\small \color{black} $Q$};
\node at (6.3,3.8) {\small \color{black} $K_i$};
\end{tikzpicture}
\caption{
The construction of $J_i$ and $K_i$ for $i\ge r-s+1$.
}\label{fig:KiConstruction}
\end{figure}
\end{center}

First observe that all $K_i$ are distinct: For $i\le r-s$ they are distinct since by assumption they are the $k$-sets causing $J$ to become infected 
in $\cK_n^k-S$ and for $i\ge r-s+1$ all $v_{i-r+s}$ are distinct and thus all $K_i$ are also distinct. Finally, if $i\le r-s< i'$ then $K_i\not =K_{i'}$ 
because $K_i\cap S=\emptyset\not =\{v_{i'-r+s}\}=K_{i'}\cap S$. An identical argument also shows that the $J_i$ are distinct, and clearly $(J_i\cup J)\subset K_i$ for all $1\le i\le r$.
Finally observe that $J_i\in \cB_{t-1}$: For $i \le r-s$ this is because $J_i \in \cC_{t-1} \subset \cB_{t-1}-S \subset \cB_{t-1}$ by the induction hypothesis,
while for $i>r-s$ the vertex $v_{i-r+s}\in J_i$
is a joker for $\cB_0\subset \cB_{t-1}$. This shows that $J\in \cM_t$, as required. This proves the first statement of the lemma.

Now suppose that $j=k-1$, and we will show the other inclusion, $\cB_t-S\subset \cC_t$.
Note that by 
the induction hypothesis, $\cC_t=\cC_{t-1}\cup \cN_t=(\cB_{t-1}-S)\cup \cN_t$,
while also the fact that $\cM_t=\cM_t-S$ implies that
$ \cB_t-S = (\cB_{t-1}\cup \cM_t) - S = (\cB_{t-1}-S)\cup \cM_t$. Therefore it is enough to show that
$\cM_t\subset \cN_t$.

Suppose that $J\in \cM_t$, so in particular $J \cap S = \emptyset$. Then
there exist distinct edges $K_1,\ldots,K_{r}\in \binom{[n]}{k}$ and distinct $j$-sets $J_1,\ldots,J_{r}\in \cB_{t-1}$ such that $(J_i\cup J)\subset K_i$ 
for all $i\in[r]$. Observe that $|K_i\backslash J|=k-j=1$ and let $x_i$ be the unique element of $K_i\backslash J$.
Since all the $K_i$ are distinct, so are all the $x_i$. Thus at most $s$ of the $x_i$ lie in $S$, and therefore at least $r-s$ of the $K_i$ are such that $K_i\cap S=\emptyset$.
Without loss of generality we assume that $K_1,\ldots,K_{r-s}$ are disjoint from $S$ (see Figure~\ref{fig:KiConstruction2}).

\begin{center}
\begin{figure}[h]
\begin{tikzpicture}[scale=1]
\vertex{.005cm}{black} at (3.3,4) (3345) {};
\vertex{.005cm}{black} at (3.8,4) (3845) {};
\vertex{.005cm}{black} at (4.3,4) (4345) {};
\vertex{.005cm}{black} at (4.8,4) (4845) {};
\vertex{.005cm}{black} at (4,5.7) (46) {};
\foreach \x in {4,...,10}
\svertex{.0005cm}{black} at (\x/3,5.7) {};
\foreach \x in {14,...,20}
\svertex{.0005cm}{black} at (\x/3,5.7) {};
\vertex{.005cm}{black} at (4/3,5.7) {};
\node at (4/3,6.1) {\small \color{black} $x_{1}$};
\vertex{.005cm}{black} at (20/3,5.7) {};
\node at (20/3,6.1) {\small \color{black} $x_{r-s}$};
\draw (4.1,4) ellipse (1.2cm and 0.43cm);
\draw (4.05,2.5) ellipse (0.8cm and 0.3cm);
\drawThreeHyperedge{3845}{46}{4845}{black}{.2cm};
\draw [rounded corners=13pt]
(2,3.8)--(2,5.2)--(3.2,5.2)--(4,6.2)--(4.8,5.2)--(6,5.2)--(6,3.3)--(2,3.3)--(2,3.8);
\node at (4,6.2) {\small \color{black} $x_i$};
\node at (3,2.5) {\small \color{black} $S$};
\node at (4.9,4.8) {\small \color{black} $J_i$};
\node at (2.5,4) {\small \color{black} $J$};
\node at (6.3,3.8) {\small \color{black} $K_i$};
\vertex{0.005cm}{black} at (3.55,2.5) (s1) {};
\vertex{0.005cm}{black} at (4.05,2.5) (s2) {};
\vertex{0.005cm}{black} at (4.55,2.5) (s3) {};
\end{tikzpicture}
\caption{
The construction of $J_i$ and $K_i$ for $1\le i\le r-s$.
}\label{fig:KiConstruction2}
\end{figure}
\end{center}

We have that all $K_1,\ldots,K_{r-s}$ and $J_1,\ldots,J_{r-s}$ are distinct and satisfy $(J_i\cup J)\subset K_i$.
Furthermore $J_i\subset K_i\subset[n]\backslash S$ and thus, by our induction hypothesis, $J_i\in \cB_{t-1}-S=\cC_{t-1}$ for $i\in [r-s]$.
It follows that $J\in \cN_t$, and since $J\in \cM_t$ was chosen arbitrarily this shows that $\cM_t\subset \cN_t$, as required.
\end{proof}

\section{Non-tight case: $j\le k-2$}\label{sec:nontight}
In the non-tight case where $j\le k-2$, we show that the size of the minimal contagious initial set is precisely $r$,
i.e., the statement of Theorem~\ref{thm:maintheorem}\ref{item:smallj}. It is clear that the size of a contagious set is always at least $r$,
and to prove the corresponding upper bound we will define a $j$-configuration of size $r$ that does in fact percolate in $\cK_n^k$.
\begin{definition}\label{def:mjstar}
Let $m$ be an integer satisfying $0\le m \le j-1$. An \emph{$(m,j)$-star $\cS$ of size $s$} consists of $s$ distinct $j$-sets
every two of which intersect precisely in a common $m$-set $M$.
We call $M$ the \emph{centre} of the $(m,j)$-star. If $m,j$ are clear from the context we call an $(m,j)$-star simply a \emph{star}.
\end{definition}
\begin{center}
\begin{figure}[h]
\begin{tikzpicture}[scale=1]
\vertex{.005cm}{black} at (0,0) (00) {};
\vertex{.005cm}{black} at (1,0) (10) {};
\vertex{.005cm}{black} at (2,0) (20) {};
\vertex{.005cm}{black} at (0.2,1.8) (021) {};
\vertex{.005cm}{black} at (1.8,1.8) (181) {};
\vertex{.005cm}{black} at (2.8,1.1) (2505) {};
\vertex{.005cm}{black} at (-0.8,1.1) (0505) {};
\vertex{.005cm}{black} at (-0.5,0.8) {};
\vertex{.005cm}{black} at (0.4,1.4)  {};
\vertex{.005cm}{black} at (1.6,1.4)  {};
\vertex{.005cm}{black} at (2.5,0.8)  {};

\drawThreeHyperedge{00}{0505}{20}{blue}{.2cm};
\drawThreeHyperedge{00}{021}{20}{darkgreen}{.2cm};
\drawThreeHyperedge{00}{181}{20}{red}{.2cm};
\drawThreeHyperedge{00}{2505}{20}{orange}{.2cm};

\draw[dashed] (1,0) ellipse (1.4cm and 0.43cm);
\node at (1,-0.7) {\small $M$};

\end{tikzpicture}
\caption{
A $(3,5)$-star of size $4$.
}\label{fig:Kiconstruction}
\end{figure}
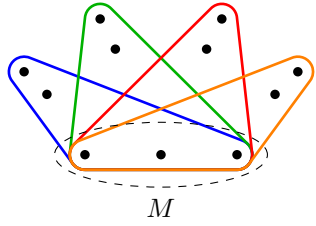
\end{center}

Note that a $(1,2)$-star is simply the standard graph notion of a star.
We will show that, under some parameter assumptions, an $(m,j)$-star of size $r$ percolates.
We begin by showing that, under appropriate conditions, if a star of size $r$ is infected,
then its centre becomes a joker $m$-set in the next step.
\begin{proposition}\label{prop:getfullstar}
Suppose that $2j-k\le m\le j-1$, and let $M$ be the centre of an $(m,j)$-star $\cS$ of size $r$. If $\cS\subset \cA_t$ then $M^*\subset \cA_{t+1}$.
\end{proposition}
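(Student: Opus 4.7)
The plan is: given an arbitrary $J\in M^*$, produce an infection event for $J$ at time $t+1$, using the star members $J_1,\ldots,J_r\in\cS\subset\cA_t$ as the infecting $j$-sets. If $J\in\cA_t$ there is nothing to show, so assume $J\notin\cA_t$, which in particular forces $J\neq J_i$ for every $i\in[r]$.

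For each $i$ I would take $K_i$ to be a $k$-edge containing $J\cup J_i$. Since $M\subset J\cap J_i$, we have $|J\cup J_i|=2j-|J\cap J_i|\le 2j-m$, and the hypothesis $m\ge 2j-k$ gives $|J\cup J_i|\le k$, so such a $K_i$ exists. The containments $(J_i\cup J)\subset K_i$ and the memberships $J_i\in\cA_t$ hold by construction, and the $J_i$ are distinct by definition of a star. So the only delicate point is arranging that the $K_i$ are also pairwise distinct.

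To handle that, I would write $K_i=(J\cup J_i)\cup X_i$, choosing the ``padding'' sets $X_i\subset[n]\setminus(J\cup J_1\cup\cdots\cup J_r)$ pairwise disjoint, each of the prescribed size $k-|J\cup J_i|$. There is plenty of room since $n$ is large compared to $rk$. Then if $K_i=K_{i'}$ for some $i\ne i'$, the disjointness of the $X_i$ from $J\cup J_1\cup\cdots\cup J_r$ and from each other forces $X_i=X_{i'}=\emptyset$, reducing the equation to $J\cup J_i=J\cup J_{i'}$, equivalently $J_i\setminus J=J_{i'}\setminus J$. But $(J_i\setminus J)\cap(J_{i'}\setminus J)=(J_i\cap J_{i'})\setminus J=M\setminus J=\emptyset$ (since the star has centre $M\subset J$), and each $J_i\setminus J$ is nonempty because $J_i\ne J$ and $|J_i|=|J|$, so this is a contradiction.

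The main obstacle is exactly this distinctness step, and it is precisely where the star structure is used: the pairwise intersection condition $J_i\cap J_{i'}=M$ is what makes the residues $J_i\setminus J$ pairwise disjoint. The constraint $m\ge 2j-k$ is used separately, only to guarantee that the $k$-edges $K_i$ have room to exist; everything else in the argument is routine once these two ingredients are in place.
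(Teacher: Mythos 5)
Your proof is correct and follows essentially the same approach as the paper: pick an arbitrary $J\supset M$, use the star members $J_1,\ldots,J_r$ as infecting $j$-sets, construct edges $K_i\supset J\cup J_i$ using the hypothesis $m\ge 2j-k$ for existence, and verify distinctness of the $K_i$ via the star structure. The only cosmetic difference is in the distinctness step (the paper restricts $K_i$ to $V(\cS)\cup J$ and argues $J_1\triangle J_2\subset J$ would force $|J|>j$, while you use pairwise disjoint padding sets and observe that the nonempty residues $J_i\setminus J$ are pairwise disjoint since $J_i\cap J_{i'}=M\subset J$), but both rest on the same underlying fact about the star centre.
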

\begin{proof}
Let $J$ be an arbitrary $j$-set containing $M$, and we aim to show that $J\in \cA_{t+1}$.
If $J\in \cS\subset \cA_t \subset \cA_{t+1}$ we are done, so assume that $J\not\in \cS$. Let $J_1,\ldots,J_r$ be the $j$-sets of $\cS$. 
For each $i\in[r]$ let $K_i$ be a $k$-set containing $J\cup J_i$ and a further $k-|J\cup J_i|$ vertices from $[n]\setminus (V(\cS) \cup J)$.
Note that it is possible to choose such a $K_i$ since $k-|J\cup J_i| \ge k-(2j-m) \ge 0$.

We need to show that all $K_i$ are distinct for $i\in[r]$ and for convenience we show that $K_1$ and $K_2$ are distinct since the full claim 
then follows by symmetry. Observe that $K_i\cap V(\cS)=J\cup J_i$ for $i=1,2$, so it is enough to show that $J\cup J_1$ and $J\cup J_2$ are distinct. 
We have $(J\cup J_1=J\cup J_2)\Longleftrightarrow (J_1\triangle J_2\subset J)$. Note that $J_1\cap J_2=M\subset J$ since $\cS$ is an $(m,j)$-star. 
Therefore if $J_1 \Delta J_2 \subset J$, we have $|J| \ge m + 2(j-m) >j$, a contradiction. 

Therefore the $J\cup J_i$ are all distinct and so we have distinct $k$-sets $K_1,\ldots,K_r$ and distinct $j$-sets $J_1,\ldots,J_r \in \cA_t$ such that $(J \cup J_i) \subset K_i$,
which is precisely the condition required to guarantee that $J\in \cA_{t+1}$. Since $J$ was an arbitrary $j$-set containing $M$,
the statement follows.
\end{proof}

We next claim that if we have a joker $m$-set, other $m$-sets which are ``close by'' also become jokers.
\begin{proposition}\label{prop:jokertransfer}
Let $h,m\in\NN$ and suppose that $h<m<j\le k-2$ and $j+m-h\le k-1$. Let $M_1$ and $M_2$ be $m$-sets such that $|M_1\cap M_2|=h$. If $M_1^*\subset \cA_t$ then $M_2^*\subset \cA_{t+1}$.
\end{proposition}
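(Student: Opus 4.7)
The plan is to argue along the lines of Proposition~\ref{prop:getfullstar}: fix an arbitrary $j$-set $J\supset M_2$ and exhibit the $r$ edges $K_1,\ldots,K_r$ and $r$ infected $j$-sets $J_1,\ldots,J_r$ required by Definition~\ref{def:infection} to force $J\in\cA_{t+1}$. If $M_1\subset J$ then $J\in M_1^*\subset\cA_t$ already, so the focus is the case $M_1\not\subset J$; crucially we still have $|M_1\cap J|\ge|M_1\cap M_2|=h$, because $M_2\subset J$.

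To build the $J_i$, I would first fix once and for all a $(j-m-1)$-subset $B_0\subset J\setminus M_1$, which exists because $|J\setminus M_1|\ge j-m$ and $j-m-1\ge 0$ (by $m<j$). Then I would pick $r$ distinct vertices $c_1,\ldots,c_r\in[n]\setminus(J\cup M_1)$, which are available for $n$ sufficiently large. Setting $J_i\coloneqq M_1\cup B_0\cup\{c_i\}$ yields $r$ pairwise distinct $j$-sets, each containing $M_1$ and hence each lying in $M_1^*\subset\cA_t$; distinctness is immediate because $c_i$ is the unique element of $J_i$ outside the common set $M_1\cup B_0$.

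For the edges, the key computation is $|J\cup J_i|=|J\cup M_1|+1$, using $B_0\subset J$ and $c_i\notin J\cup M_1$. Since $|J\cup M_1|=j+m-|J\cap M_1|\le j+m-h\le k-1$ by hypothesis, this gives $|J\cup J_i|\le k$, and I can extend each $J\cup J_i$ to a $k$-edge $K_i$ by appending $k-|J\cup J_i|$ further vertices drawn from $[n]\setminus(J\cup M_1\cup\{c_1,\ldots,c_r\})$, which has ample room for $n$ large. The resulting $K_i$ are pairwise distinct because each contains $c_i$ but no other $c_{i'}$. Together with the $J_i\in\cA_t$ this certifies $J\in\cA_{t+1}$, and since $J\supset M_2$ was arbitrary, $M_2^*\subset\cA_{t+1}$ as claimed.

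The main (and essentially only) obstacle is the parameter bookkeeping. The assumption $j+m-h\le k-1$ is precisely the single unit of slack required to fit $J\cup M_1$ together with one extra vertex $c_i$ inside a $k$-edge, while $m<j$ is what makes $B_0$ a well-defined set of nonnegative size. Every other step reduces to checking that the sets being constructed are disjoint or distinct where required, which follows automatically from the structure $J_i=M_1\cup B_0\cup\{c_i\}$ with the $c_i$ chosen outside $J\cup M_1$.
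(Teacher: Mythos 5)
Your proof is correct and follows essentially the same route as the paper: you construct the infected $j$-sets $J_i = M_1\cup B_0\cup\{c_i\}$ using one distinguishing vertex $c_i$ per set, and extend $J\cup J_i$ (of size $|J\cup M_1|+1\le k$) to a $k$-edge $K_i$, exactly mirroring the paper's choice of $J_i = M_1\cup\{x_i\}\cup(\text{$j-m-1$ further vertices of $U$})$ and $K_i = U\cup S'\cup\{x_i\}$. The only cosmetic difference is that you fix the $(j-m-1)$-set $B_0\subset J\setminus M_1$ once and for all rather than allowing it to vary with $i$, which does not affect the argument.
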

\begin{proof}
Let $J$ be any $j$-set containing $M_2$, and we aim to show that $J \in \cA_{t+1}$.
Let us define $U\coloneqq J \cup M_1$, and observe that $|U| \le j+m-h \le k-1$.
We can therefore choose a $(k-|U|-1)$-set $S'\subset [n]\backslash U$ and distinct vertices $x_1,\ldots,x_r\in [n]\backslash(U\cup S')$,
and set $S_i\coloneqq S'\cup\{x_i\}$ for $i\in [r]$. Then setting $K_i\coloneqq U \cup S_i$ for $i\in [r]$ (see Figure~\ref{fig:Kiconstruction}),
these are distinct $k$-sets since each contains precisely one of the $x_i$.

\begin{center}
\begin{figure}[h]
\begin{tikzpicture}[scale=1]
\vertex{.005cm}{black} at (2,3) (23) {};
\vertex{.005cm}{black} at (3,3) (33) {};
\vertex{.005cm}{black} at (4,3) (43) {};
\vertex{.005cm}{black} at (5,3) (53) {};
\vertex{.005cm}{black} at (6,3) (63) {};
\vertex{.005cm}{black} at (4.4,2.15) (42) {};
\vertex{.005cm}{black} at (5.6,2.15) (62) {};
\vertex{.005cm}{black} at (5,2.15) (52) {};
\vertex{.005cm}{black} at (3.5,4.5) (35) {};
\vertex{.005cm}{black} at (4.5,4.5) (45) {};
\vertex{.005cm}{black} at (4,5.7) (46) {};
\foreach \x in {4,...,10}
\svertex{.0005cm}{black} at (\x/3,5.7) {};
\foreach \x in {14,...,20}
\svertex{.0005cm}{black} at (\x/3,5.7) {};
\vertex{.005cm}{black} at (4/3,5.7) {};
\node at (4/3,5.4) {\small \color{black} $x_1$};
\vertex{.005cm}{black} at (20/3,5.7) {};
\node at (20/3,5.4) {\small \color{black} $x_r$};
\draw (4,4.5) ellipse (1cm and 0.3cm);
\draw (3,3) ellipse (1.5cm and 0.4cm);
\draw (5,3) ellipse (1.5cm and 0.4cm);
\drawFourHyperedge{62}{42}{43}{63}{black}{.2cm};
\draw [rounded corners=13pt]
(1,2)--(1,5.2)--(3.2,5.2)--(4,6.2)--(4.8,5.2)--(7,5.2)--(7,1.5)--(1,1.5)--(1,2);
\node at (4,5.4) {\small \color{black} $x_i$};
\node at (2.5,4.5) {\small \color{black} $S'$};
\node at (1.5,3.4) {\small \color{black} $M_1$};
\node at (6.5,3.4) {\small \color{black} $M_2$};
\node at (6,2) {\small \color{black} $J$};
\node at (7.3,3.8) {\small \color{black} $K_i$};
\node at (0.3,2.7) {\small \color{black} $U$};
\draw [decorate,ultra thick,
decoration = {calligraphic brace,
raise=5pt,
amplitude=8pt}] (1,1.8) -- (1,3.6);
\end{tikzpicture}
\caption{
The construction of $K_i$ with $m=3$, $h=1$, $j=6$ and $k=11$.
In this example $J \cap(M_1\setminus M_2) = \emptyset$, although this need not be true in general.
}\label{fig:Kiconstruction}
\end{figure}
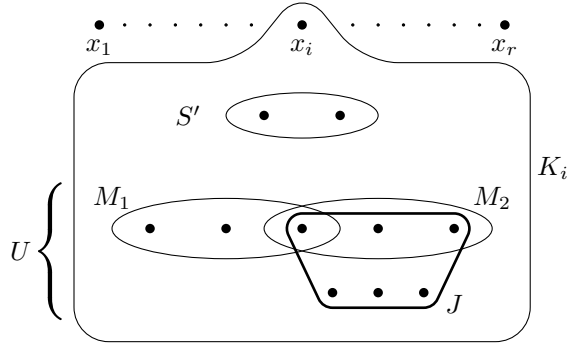
\end{center}

Further, for each $i \in [r]$, let $J_i$ be a $j$-set consisting of $M_1\cup\{x_i\}$ and
$j-m-1$ further vertices of $U$ (chosen arbitrarily). Note that $|U| \ge |J| = j$, so there
are certainly enough vertices of $U$ available.

Now since $M_1 \subset J_i$ and $M_1^* \subset \cA_t$, we have $J_i \in \cA_t$ for each $i\in [r]$; furthermore, the $J_i$ are all distinct because 
they each contain the corresponding $x_i$; and finally $(J \cup J_i) \subset K_i$. It follows that $J \in \cA_{t+1}$,
and since $J$ was an arbitrary $j$-set containing $M_2$, the claim follows.
\end{proof}

We remark that Proposition~\ref{prop:jokertransfer} does not hold for the tight case $j=k-1$: Even if $h = m-1$, i.e., $M_1$ and $M_2$ are as close 
together as they can be without being identical, for a $j$-set $J$ containing $M_2$ but not $M_1$, we have $|M_1 \cup J| = j+1 = k$, and therefore we can 
find only one edge $K_1 = M_1 \cup J$ rather than the $r$ distinct edges we would require to infect $J$. This is the fundamental reason why the tight 
case is different and is reflected in the condition $j+m-h\le k-1$, which can only be satisfied with the appropriate choice of $m$ and $h$ if $j\le k-2$. 

As a corollary of Proposition~\ref{prop:jokertransfer}, we observe that if we have a joker $m$-set at time $t$ we will have percolation after at most $m$ further steps. 
\begin{corollary}\label{cor:jokertransfer}
Let $m\in\NN$ and suppose that $m<j\le k-2$. Let $M\in\binom{[n]}{m}$. If $M^* \subset \cA_t$, then $\cA_{t+m} = \binom{[n]}{j}$.
\end{corollary}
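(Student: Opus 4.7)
The plan is to proceed by induction on $m$. The base case $m = 0$ is trivial, since then $M = \emptyset$ and $M^* = \binom{[n]}{j}$, so the conclusion holds with $\cA_t$ already equal to $\binom{[n]}{j}$. For the inductive step, I will assume the claim for $m - 1$ and let $M$ be a joker $m$-set at time $t$. The key reduction is to show that any fixed $(m-1)$-subset $M' \subset M$ is a joker at time $t + 1$; once this is established, applying the inductive hypothesis to $M'$ starting at time $t+1$ yields $\cA_{(t+1) + (m-1)} = \cA_{t+m} = \binom{[n]}{j}$, as required.

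To show $M'^* \subset \cA_{t+1}$, I would fix any $j$-set $J \supset M'$. If $M \subset J$, then $J \in M^* \subset \cA_t \subset \cA_{t+1}$ and there is nothing to do. Otherwise, letting $u$ denote the unique vertex of $M \setminus M'$, we have $u \notin J$. Since $|J| > |M'|$, we may pick any $w \in J \setminus M'$; this choice forces $w \ne u$, so setting $\widetilde M \coloneqq M' \cup \{w\}$ gives an $m$-set with $|M \cap \widetilde M| = |M'| = m - 1$. Proposition~\ref{prop:jokertransfer}, applied with $h = m - 1$, then transfers joker status from $M$ to $\widetilde M$ in one step, giving $\widetilde M^* \subset \cA_{t+1}$; since $\widetilde M \subset J$, we conclude $J \in \cA_{t+1}$.

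The parameter check for Proposition~\ref{prop:jokertransfer} is routine: $h = m - 1 < m < j$ follows from the corollary's hypothesis, and $j + m - h = j + 1 \le k - 1$ is equivalent to $j \le k - 2$, which is also given. I do not expect any serious obstacle here; the only subtle point is ensuring that $\widetilde M$ has intersection of size exactly $m - 1$ with $M$ (rather than $m$), but this is automatic from $w \in J$ and $u \notin J$. The whole argument can thus be read as a one-step "propagation" of the joker property through Proposition~\ref{prop:jokertransfer}, combined with straightforward induction on the size of the joker set.
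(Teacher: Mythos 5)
Your proof is correct, and it takes a genuinely different route from the paper. The paper argues by propagating joker status \emph{between} $m$-sets of the same size: for a target $m$-set $\tilde M$ it builds a chain $M = M_1, M_2, \ldots, M_{1+m-h'} = \tilde M$ with consecutive intersections of size $m-1$, applies Proposition~\ref{prop:jokertransfer} at each link, and then concludes by observing that every $j$-set contains some $m$-set. You instead induct on $m$ itself, \emph{shrinking} the joker: you show that any $(m-1)$-subset $M'$ of a joker $m$-set $M$ becomes a joker one step later, and then invoke the inductive hypothesis at the smaller size. Both arguments use Proposition~\ref{prop:jokertransfer} with $h = m-1$ as the single engine, and both end with the same parameter check $j+m-h = j+1 \le k-1$, so neither is fundamentally more economical. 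Your version has the small conceptual advantage that the recursion bottoms out at the clean statement ``the empty set is a joker,'' while the paper's version makes more transparent the intermediate fact that \emph{every} $m$-set becomes a joker within $m$ steps (in fact within $m-h'$ steps). One point worth stating explicitly in a write-up: to apply Proposition~\ref{prop:jokertransfer} you implicitly treat $h=0$ as allowed when $m=1$; this is fine (the proposition's proof works for $h=0$, and the paper's own proof of this corollary also takes $h=m-1=0$ in that case), but it is worth a remark since $\NN$ is used somewhat loosely in the paper.
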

\begin{proof}
Consider any $m$-set $\tilde{M}$ and set $h'\coloneqq|M\cap \tilde{M}|$. Fix a sequence of $m$-sets 
$M=M_1,M_2,\ldots,M_{1+m-h'}=\tilde{M}$ with $|M_i\cap M_{i+1}|=m-1$ for $i\in [m-h']$.
We claim that $M_i^*\subset \cA_{t+i-1}$ for all $i\in[1+m-h']$ and prove this statement by induction on $i$.
The base case $M_1^*=M^*\subset \cA_t$ holds by assumption. Suppose that the statement holds for $i$, so $M_i^*\subset \cA_{t+i-1}$.
Setting $h\coloneqq|M_i\cap M_{i+1}|=m-1$ we have $h<m<j\le k-2$ and $j+m-h=j+1\le k-1$,
and by Proposition~\ref{prop:jokertransfer} we deduce that $M_{i+1}^*\subset \cA_{(t+i-1)+1}$, which proves the induction hypothesis.

In particular, we have $\tilde{M}^*\subset \cA_{t+m-h'}\subset \cA_{t+m}$.
Since $\tilde{M}$ was chosen arbitrarily and every $j$-set contains such an $m$-set $\tilde{M}$, we have $\cA_{t+m}=\binom{[n]}{j}$.
\end{proof}

We can now complete the proof of the first statement of Theorem~\ref{thm:maintheorem}.

\begin{lemma}\label{lem:mostcases}
Let $m\in\NN$ and suppose that $2j-k\le m<j\le k-2$. Then an $(m,j)$-star $\cS$ of size~$r$ percolates in $\cK_n^k$.
In particular, if $j \le k-2$ we have $\ell_n(k,j,r)=r$.
\end{lemma}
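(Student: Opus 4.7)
The plan is to deduce the lemma as a quick two-step consequence of the two structural results already established in this section. First I would apply Proposition~\ref{prop:getfullstar} to the initial configuration $\cA_0 = \cS$: since $2j-k \le m \le j-1$ by hypothesis, this immediately yields that the centre $M$ of $\cS$ becomes a joker $m$-set at time $1$, i.e., $M^* \subset \cA_1$.

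Next, since $m < j \le k-2$, Corollary~\ref{cor:jokertransfer} applies with $t = 1$ and gives $\cA_{1+m} = \binom{[n]}{j}$. Therefore $\cS$ percolates, proving the first statement.

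For the ``in particular'' part, I would split into upper and lower bounds. For the upper bound, observe that under the assumption $j \le k-2$ we may simply take $m = j-1$: this satisfies $m < j$ trivially, and the inequality $2j - k \le j-1$ is equivalent to $j \le k+1$, which is certainly true. Hence an $(j-1,j)$-star of size $r$ (i.e., $r$ distinct $j$-sets sharing a common $(j-1)$-set, which fits into $\cK_n^k$ for $n$ sufficiently large) is contagious by the first part of the lemma, giving $\ell_n(k,j,r) \le r$. For the matching lower bound, note that any single infection step requires the existence of $r$ distinct $j$-sets $J_1, \ldots, J_r$ already in $\cA_{t-1}$; in particular, if $|\cA_0| < r$ then $\cA_1 = \cA_0$ and the process halts without infecting anything, so $\cA_0$ cannot be contagious (as long as $n$ is large enough that $\binom{n}{j} > r$, which is subsumed by the assumption $n \ge n_0$). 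Thus $\ell_n(k,j,r) \ge r$.

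I do not anticipate any real obstacle here, as both halves reduce to invocations of results already proved in this section; the only mild subtlety is checking that the parameter range $2j-k \le m \le j-1$ is non-empty precisely under the hypothesis $j \le k-2$ (in fact it is even non-empty for $j \le k-1$, but for the non-tight conclusion we genuinely need $m < j \le k-2$ in order for Corollary~\ref{cor:jokertransfer} to apply).
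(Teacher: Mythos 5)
Your proof is correct and follows essentially the same route as the paper: set $\cA_0=\cS$, invoke Proposition~\ref{prop:getfullstar} to make the centre $M$ a joker $m$-set at time $1$, then invoke Corollary~\ref{cor:jokertransfer} to reach $\cA_{1+m}=\binom{[n]}{j}$, and take $m=j-1$ for the ``in particular'' part. The only difference is that you spell out the trivial lower bound (fewer than $r$ initially infected $j$-sets means no new $j$-set can ever satisfy the infection rule), which the paper leaves implicit.
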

\begin{proof}
Let $M$ be the centre of the star $\cS$. Setting $\cA_0=\cS$, Proposition~\ref{prop:getfullstar} implies that $M^*\subset \cA_{1}$. By Corollary~\ref{cor:jokertransfer} 
we have $\cA_{1+m} = \binom{[n]}{j}$ and so $S$ is a contagious starting configuration which consists of $r$ distinct $j$-sets.
This shows that $\ell_n(k,j,r)\le r$ provided we can find an integer $m$ satisfying $2j-k\le m <j \le k-2$, and this is
always possible (by setting $m=j-1$) if $j \le k-2$; the corresponding lower bound is trivial.
\end{proof}

\section{Preliminaries for the tight case}\label{sec:prelimtight}

In the remainder of the paper, our focus will be on the tight case, namely $j=k-1$.
In view of Theorem~\ref{thm:maintheorem}, we will further distinguish the subcases $k=3$ and $k\ge 4$.
First, in this section we present some auxiliary results and terminology which will be needed for both subcases.

\begin{definition}[Extension of a $j$-set]\label{def:extension}
Given a $k$-uniform hypergraph $\cH=(V,\cE)$ and a $j$-configuration $\cJ\subset\binom{V}{j}$,
we say a vertex $v\in V$ is an \emph{extension for a $j$-set $J\in\binom{V}{j}$} 
if there exists $J'\in \cJ\backslash J$ with $J'\subset (J\cup\{v\})\in \cE$. We define the \emph{extension set of $J$} as 
\[\extset_{\cJ}(J)=\extset_{\cJ,\cH}(J)\coloneqq\{v\in V:v \text{ is an extension for }J \}.\]
\end{definition}

The following is a useful equivalent formulation of Definition~\ref{def:infection} in the tight case.
\begin{claim}\label{claim:whenpairgetsinfected}
Suppose that $1\le j=k-1$. Then in the $(r,\cA_0)$-infection process, for any $j$-set $J\not\in \cA_t$ we have
\[J\in \cA_{t+1}\Longleftrightarrow |\extset_{\cA_t}(J)|\ge r.\]
\end{claim}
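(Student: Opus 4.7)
The plan is to unpack what the two sides mean when $j=k-1$ and verify that they are literally translations of each other, once one observes that each edge $K$ containing a fixed $(k-1)$-set $J$ is determined by a single ``extra'' vertex $v \in [n]\setminus J$.

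First I would handle the forward implication. Suppose $J \in \cA_{t+1}$, so by Definition~\ref{def:infection} there are distinct edges $K_1,\ldots,K_r \in \cE$ and distinct $J_1,\ldots,J_r \in \cA_t$ with $J \cup J_i \subset K_i$. Since $|J|=k-1$ and $|K_i|=k$, and since $J \notin \cA_t$ forces $J \ne J_i$, we must have $K_i = J \cup J_i$, and there is a unique vertex $v_i \in K_i \setminus J$. Since the $K_i$ are distinct, so are the $v_i$. Moreover $J_i \subset K_i = J\cup\{v_i\}$ with $J_i \neq J$, so $v_i$ witnesses that it is an extension for $J$ (with $J' = J_i$). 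Hence $\{v_1,\ldots,v_r\} \subset \extset_{\cA_t}(J)$, giving $|\extset_{\cA_t}(J)| \ge r$.

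For the converse, suppose $v_1,\ldots,v_r$ are distinct elements of $\extset_{\cA_t}(J)$. For each $i$ pick $J_i \in \cA_t\setminus\{J\}$ with $J_i \subset J \cup \{v_i\}$ and set $K_i \coloneqq J \cup \{v_i\} \in \binom{[n]}{k}$. The $K_i$ are distinct because the $v_i$ are. The key point requiring a (very short) argument is that the $J_i$ are distinct: since $|J_i|=k-1$ and $J_i \subset J\cup\{v_i\}$ with $J_i \neq J$, necessarily $v_i \in J_i$; but $v_i \notin J\cup\{v_{i'}\} \supset J_{i'}$ for $i' \ne i$, so $J_i \ne J_{i'}$. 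Since $J \cup J_i \subset K_i$ and each $J_i \in \cA_t$, the defining condition (\textasteriskcentered) is satisfied and $J\in\cA_{t+1}$.

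I do not expect any real obstacle here; the only place one must be mildly careful is the distinctness of the $J_i$ in the backward direction, and this is immediate from the observation $v_i \in J_i$. The whole claim is essentially the statement that, in the tight case, the edges incident to $J$ are in bijection with vertices outside $J$, and each such edge can contribute at most one new witness $J_i$.
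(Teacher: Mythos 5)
Your proof is correct and follows essentially the same approach as the paper: exploiting that in the tight case $j=k-1$, an edge containing $J$ is determined by the unique extra vertex $v\in K\setminus J$, so the conditions in Definition~\ref{def:infection} and Definition~\ref{def:extension} translate directly into one another. The paper's proof is terser and presents the two directions as a single chain of equivalences; you spell out the two implications and the distinctness of the $J_i$ more explicitly, but the content is identical.
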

\begin{proof}
We have $J\in \cA_{t+1}$ iff there exist distinct edges $K_1,\ldots,K_r$ and distinct $j$-sets $J_1,\ldots,J_r\in \cA_t$ with $(J_i\cup J)\subset K_i$ 
for all $i\in[r]$. Since $k=j+1$ this holds iff there are at least $r$ distinct vertices $w_1,\ldots,w_r$ with $K_i\coloneqq J\cup\{w_i\}$ and $J_i=\{w_i\}\cup S$ 
where $S\subset J$ with $|S|=j-1$. By Definition~\ref{def:extension} this is equivalent to $w_i\in \extset_{\cA_t}(J)$ for every $i$ 
and therefore also to $|\extset_{\cA_t}(J)|\ge r$.
\end{proof}

We observe that having $r$ jokers is sufficient for percolation in the tight case. The proof follows immediately from 
Lemma~\ref{lem:reductionlemma} with $s=r$ or can be easily proved directly.
\begin{claim}\label{claim:sufficientforperc}
Suppose that $2\le j=k-1$. Consider the $(r,\cA_0)$-infection process and suppose that there exists $t\in\NN$ such that 
we have $r$ distinct jokers for $\cA_t$. Then $\cA_{t+1}=\binom{[n]}{j}$.\qed
\end{claim}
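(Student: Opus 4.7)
The plan is to prove the claim directly via the characterisation of infection in the tight case provided by Claim~\ref{claim:whenpairgetsinfected}. Let $v_1,\ldots,v_r$ be the $r$ distinct jokers for $\cA_t$, and let $J\in\binom{[n]}{j}$ be an arbitrary $j$-set; my goal is to show $J\in \cA_{t+1}$.

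I would split into two cases depending on whether $J$ meets the joker set. If $J$ contains some $v_i$, then $J\in v_i^*\subset \cA_t\subset \cA_{t+1}$, so there is nothing to do. Otherwise, $J$ is disjoint from $\{v_1,\ldots,v_r\}$, and I would verify that every $v_i$ lies in $\extset_{\cA_t}(J)$: fix any vertex $w\in J$ and set $J_i\coloneqq (J\setminus\{w\})\cup\{v_i\}$. Then $J_i$ is a $j$-set distinct from $J$ (it contains $v_i\notin J$), it contains the joker $v_i$ so $J_i\in v_i^*\subset \cA_t$, and $J_i\subset J\cup\{v_i\}$, which is a $k$-set (using $k=j+1$) and hence an edge of $\cK_n^k$. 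Thus $v_i$ is an extension for $J$ with respect to $\cA_t$ in the sense of Definition~\ref{def:extension}. Since the jokers $v_1,\ldots,v_r$ are distinct, this gives $|\extset_{\cA_t}(J)|\ge r$, and Claim~\ref{claim:whenpairgetsinfected} then yields $J\in \cA_{t+1}$.

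There is essentially no obstacle here: the only thing to check is the very minor point that $n$ is large enough for $J$ to admit at least one vertex $w$ (which is immediate as $j\ge 1$) and that all the $K_i\coloneqq J\cup\{v_i\}$ are genuinely edges of $\cK_n^k$. As an alternative, one could derive the statement as a direct corollary of Lemma~\ref{lem:reductionlemma} applied with $s=r$: the hypothesised infection process then reduces to an $(0,\cC_0)$-process in $\cK_n^k-S$, in which the threshold condition is vacuous so every remaining $j$-set is infected in one step, and the furthermore-clause of the lemma (valid since $j=k-1$) transfers this conclusion back to the original process. I would mention this alternative only briefly, as the direct argument is shorter and makes the role of the jokers more transparent.
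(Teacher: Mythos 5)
Your proposal is correct and matches the paper's approach: the paper notes (without spelling out details) that the claim "follows immediately from Lemma~\ref{lem:reductionlemma} with $s=r$ or can be easily proved directly," and you supply exactly these two arguments, with the direct extension-counting argument via Claim~\ref{claim:whenpairgetsinfected} as your primary route and the reduction-lemma argument (threshold $0$ in $\cK_n^k - S$) as the alternative.
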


\section{Special tight case: $k=3,\, j=2$}\label{sec:specialtight}
We proceed with the proof of Theorem~\ref{thm:maintheorem}\ref{item:exactj}. We prove the upper and lower bounds on $\ell_n(3,2,r)$ separately.
\subsection{Lower bound}

We set $W_0 \coloneqq \emptyset$ and recursively define pairs $P_1,P_2,\ldots$, and sets 
 $W_1,W_2,\ldots \subset [n]$,
where $P_i$ is the first pair of $\binom{[n]\setminus W_{i-1}}{2}$ to become infected and
$W_i := \bigcup_{i'=1}^i V(P_{i'})$; if there is more than one possible choice for $P_i$, we pick one arbitrarily.

Note that $P_1$ is simply the first pair to become infected, $P_2$ is the first pair not incident to $P_1$ to become infected,
and so on, so the $P_i$ form a matching. Clearly the $P_i$ are only well-defined for as long as such a pair exists.
The following observation is crucial.

\begin{proposition}\label{prop:extensions}
For any integer $i\ge 0$, if $P_i$ exists we have $|\extset_{\cA_0}(P_i) \setminus W_{i-1}| \ge r-2(i-1)$.
\end{proposition}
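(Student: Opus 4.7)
The plan is to invoke Claim~\ref{claim:whenpairgetsinfected} at the time $P_i$ enters the infected set, and then argue that those extensions witnessing this infection which lie outside $W_{i-1}$ must already be present in the initial configuration $\cA_0$. I will assume throughout that $P_i$ denotes a pair that becomes newly infected through the process, so that $t_i \coloneqq \min\{t : P_i \in \cA_t\}$ satisfies $t_i \ge 1$ (otherwise $t_i - 1$ would not make sense below).

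First, applying Claim~\ref{claim:whenpairgetsinfected} to $P_i$ at time $t_i$ gives $|\extset_{\cA_{t_i - 1}}(P_i)| \ge r$. Since $|W_{i-1}| = 2(i-1)$, at least $r - 2(i-1)$ of these extensions lie outside $W_{i-1}$. Writing $P_i = \{a, b\}$, each such extension $v \notin W_{i-1}$ is witnessed by a pair $J' \in \cA_{t_i - 1}$ with $J' \in \{\{v,a\}, \{v,b\}\}$. Because $v, a, b \in [n] \setminus W_{i-1}$, we have $J' \in \binom{[n] \setminus W_{i-1}}{2}$.

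The next step is to exploit the defining minimality of $P_i$ as the first pair of $\binom{[n] \setminus W_{i-1}}{2}$ to become infected: any pair in this set lying in $\cA_{t_i - 1}$ must already have belonged to $\cA_0$, since otherwise it would have become newly infected strictly before $P_i$, contradicting the choice of $P_i$. Applying this to the pair $J'$ yields $J' \in \cA_0$, and hence $v \in \extset_{\cA_0}(P_i) \setminus W_{i-1}$. Since this holds for each of the at least $r - 2(i-1)$ extensions $v$ outside $W_{i-1}$, we conclude $|\extset_{\cA_0}(P_i) \setminus W_{i-1}| \ge r - 2(i-1)$, as required.

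The only real point of care is bookkeeping around the convention for $P_i$: one has to rule out the degenerate case $t_i = 0$ (that is, $P_i \in \cA_0$) for the step $t_i - 1$ to make sense, which is the reason I read ``first to become infected'' as ``first to be newly infected by the process''. Once this is fixed, the argument is driven entirely by the minimality encoded in the definition of $P_i$, which forces every local witness for the infection of $P_i$ that lies outside $W_{i-1}$ to come from $\cA_0$ rather than from any intermediate stage of the process.
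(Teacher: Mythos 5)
Your proof is correct and follows essentially the same approach as the paper: apply Claim~\ref{claim:whenpairgetsinfected} at the time $P_i$ becomes infected, subtract off the at most $2(i-1)$ extensions in $W_{i-1}$, and use the minimality of $P_i$ to conclude that the remaining witnesses must already lie in $\cA_0$. Your remark about the convention (that $P_i \notin \cA_0$, so that $t_i \ge 1$) correctly identifies the reading the paper itself uses, as its proof opens with ``Suppose that $P_i \in \cN_h$,'' and the sets $\cN_h$ are only defined for $h \ge 1$.
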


\begin{proof}
Suppose that $P_i \in \cN_h$ for some integer $h$, i.e., it becomes infected in step~$h$ of the process.
Then by Claim~\ref{claim:whenpairgetsinfected} we have $|\extset_{\cA_{h-1}}(P_i)| \ge r$.
On the other hand, by the definition of $P_i$, all pairs of $\cA_{h-1} \setminus \cA_0$ are incident
to $W_{i-1}$ (since they become infected before $P_i$, and could otherwise have been chosen as $P_i$).
It follows that
$(\extset_{\cA_{h-1}}(P_i)\setminus W_{i-1}) \subset  (\extset_{\cA_0}(P_i) \setminus W_{i-1})$, and therefore
$$
|\extset_{\cA_0}(P_i)\setminus W_{i-1}| \ge |\extset_{\cA_{h-1}}(P_i)| - |W_{i-1}| \ge r -2(i-1),
$$
as required.
\end{proof}

We can now deduce the following.

\begin{lemma}\label{lem:lowerboundspecialtight}
If a $2$-configuration $\cA_0$ percolates, then $|\cA_0|\ge\frac{1}{4}\left((r+1)^2-\mathbbm{1}_{r\in 2\NN}\right)$.
In particular, $\ell_n(3,2,r)\ge \frac{1}{4}\left((r+1)^2-\mathbbm{1}_{r\in 2\NN}\right)$.
\end{lemma}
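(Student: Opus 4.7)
The plan is to apply Proposition~\ref{prop:extensions} to every pair $P_i$ of the greedy sequence in order to extract many distinct pairs of $\cA_0$, and then to argue that the pairs extracted for different indices $i$ cannot overlap. Writing $\ell \coloneqq \lceil r/2 \rceil$, the largest $i$ for which $r-2(i-1)\ge 1$, summing the contributions should give
\[
|\cA_0| \ge \sum_{i=1}^{\ell}(r-2(i-1)),
\]
which a short parity calculation shows equals $s(s+1) = \tfrac{(r+1)^2-1}{4}$ when $r = 2s$ and $(s+1)^2 = \tfrac{(r+1)^2}{4}$ when $r=2s+1$, precisely the claimed bound.

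Before performing the counting I would dispose of the degenerate case in which the greedy sequence truncates before reaching $P_\ell$. If $P_i$ fails to exist for some $i \le \ell$, then no pair in $\binom{[n]\setminus W_{i-1}}{2}$ is ever newly infected; since $\cA_0$ is contagious, every such pair must already lie in $\cA_0$, giving $|\cA_0| \ge \binom{n-2(i-1)}{2}$, which comfortably exceeds $(r+1)^2/4$ for $n \ge n_0(3,2,r)$. Hence I may assume $P_1,\ldots,P_\ell$ all exist. For each $i \in [\ell]$ Proposition~\ref{prop:extensions} then supplies at least $r-2(i-1)$ vertices $v \in \extset_{\cA_0}(P_i)\setminus W_{i-1}$; for each such $v$ I pick a pair $Q_v \in \cA_0$ of the form $\{v,u\}$ with $u \in P_i$. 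Because $v \notin P_i$ (an extension of a $j$-set never lies inside it, as $|J\cup\{v\}|=k>j$), these pairs are indexed by their non-$P_i$ endpoint and hence form a set $\cC_i \subset \cA_0$ of size at least $r-2(i-1)$.

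The heart of the argument is then to show that $\cC_1,\ldots,\cC_\ell$ are pairwise disjoint. Fix $i<i'$ and suppose a pair $\{u,v\}$ belonged to both $\cC_i$ and $\cC_{i'}$. From membership in $\cC_i$ I may assume $u \in P_i$ and $v \notin W_i$. Since the $P_j$ form a matching, $u \in P_i$ forces $u \notin P_{i'}$, so for the pair to also lie in $\cC_{i'}$ one would need $v \in P_{i'}$ and $u$ to play the role of the extension endpoint, hence $u \notin W_{i'-1}$; but $u \in P_i \subset W_i \subset W_{i'-1}$, a contradiction. Adding $|\cC_1|+\cdots+|\cC_\ell|$ then yields the desired bound. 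I expect this disjointness step to be the main obstacle: it relies crucially on the $W_{i-1}$ restriction in Proposition~\ref{prop:extensions}, which forces each counted pair to have an endpoint freshly introduced at stage $i$ and therefore prevents it from being reassigned to a later stage.
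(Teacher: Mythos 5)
Your proposal is correct and follows essentially the same route as the paper: the same greedy sequence $(P_i)$, the same two-case split (truncated sequence vs.\ $s\ge\lceil r/2\rceil$), and the same summation of $|\extset_{\cA_0}(P_i)\setminus W_{i-1}|$ over $i\le\lceil r/2\rceil$ using Proposition~\ref{prop:extensions}. The only difference is that you spell out the disjointness of the families $\cC_i$ in more detail than the paper's brief ``since $P_i\subset W_i$\ldots'' remark; the underlying reason is identical.
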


\begin{proof}
Let $s$ be the number of $P_i$ which exist, i.e., $P_1,\ldots,P_s$ are well-defined, but subsequently no pair in $[n]\setminus W_s$ becomes infected.
We let $z\coloneqq\lceil\frac{r}{2}\rceil$ and distinguish two cases.

\textbf{Case 1: $s<z$.}\\
By the definition of $s$, no pair of $\cA_\tau \setminus \cA_0$ lies in $[n]\setminus W_s$.
Therefore if $\cA_0$ percolates, we have $\binom{[n]\setminus W_s}{2} \subset \cA_0$,
and therefore
$$
|\cA_0| \ge \binom{n-|W_s|}{2} = \binom{n-2s}{2} \ge \binom{n-r}{2}
\ge\frac{(r+1)^2}{4}\ge\frac{1}{4}\left((r+1)^2-\mathbbm{1}_{r\in 2\NN}\right),
$$
where we used the fact that $n\ge n_0 \ge 2r+1$.

\textbf{Case 2: $s\ge z$.}\\
We first claim that
$$
|\cA_0| 
 \ge \sum_{i=1}^z|\extset_{\cA_0}(P_i)\setminus W_{i-1}|.
$$
To see this, observe that for each $1\le i \le z$, we can pick
$|\extset_{\cA_0}(P_i)\setminus W_{i-1}|$ distinct pairs of $\cA_0$ which are incident to~$P_i$,
but not incident to~$W_{i-1}$ (importantly, by construction $P_i$ is itself not incident to~$W_{i-1}$).
Then since $P_i \subset W_i$, the pairs that we chose in this way are necessarily all distinct even
over different choices of $i$.

We now simply apply Proposition~\ref{prop:extensions} to deduce that 
$$
|\cA_0| \ge \sum_{i=1}^z |\extset_{\cA_0}(P_i)\setminus W_{i-1}|
\ge \sum_{i=1}^{z}(r-2(i-1))
\ge\frac{1}{4}\left((r+1)^2-\mathbbm{1}_{r\in 2\NN}\right),
$$
as required.
\end{proof}

\subsection{Upper bound}
In order to prove an upper bound on $\ell_n(3,2,r)$ we will provide a construction of the appropriate size that percolates.
\begin{definition}\label{def:optimalstars}
For $r \in \NN$ we define $\stars_r$ to be a set of $r$ vertex-disjoint $(1,2)$-stars of sizes $1,1,2,2,\ldots,\lceil\frac{r-1}{2}\rceil,\lceil\frac{r}{2}\rceil$.
\end{definition}

\begin{center}
\begin{figure}[h]
\captionsetup{justification=centering,margin=2cm}
\begin{tikzpicture}[scale=1.4]
  \vertex{.03cm}{black} at (0,0) (00) {};
  \vertex{.03cm}{black} at (0,1) (01) {};
  \vertex{.03cm}{black} at (1,0) (10) {};
  \vertex{.03cm}{black} at (1,1) (11) {};
  \vertex{.03cm}{black} at (2,0) (20) {};
  \vertex{.03cm}{black} at (1.8,1) (181) {};
  \vertex{.03cm}{black} at (2.2,1) (221) {};
  \vertex{.03cm}{black} at (3,0) (30) {};
  \vertex{.03cm}{black} at (2.8,1) (281) {};
  \vertex{.03cm}{black} at (3.2,1) (321) {};
  \vertex{.03cm}{black} at (4,0) (40) {};
  \vertex{.03cm}{black} at (4,1) (41) {};
  \vertex{.03cm}{black} at (3.7,1) (381) {};
  \vertex{.03cm}{black} at (4.3,1) (421) {};
  \drawPair{00}{01}{black};
  \drawPair{10}{11}{black};
  \drawPair{20}{181}{black};
  \drawPair{20}{221}{black};
  \drawPair{30}{281}{black};
  \drawPair{30}{321}{black};
  \drawPair{40}{381}{black};
  \drawPair{40}{421}{black};
  \drawPair{40}{41}{black};
  \node at (0,-0.35) {\small \color{black} $v_1$};
  \node at (1,-0.35) {\small \color{black} $v_2$};
  \node at (2,-0.35) {\small \color{black} $v_3$};
  \node at (3,-0.35) {\small \color{black} $v_4$};
  \node at (4,-0.35) {\small \color{black} $v_5$};
\end{tikzpicture}
\caption{The $2$-configuration $\stars_5$}
\end{figure}
\end{center}
Whenever we have such a configuration $\stars_r$ we will use $v_1,\ldots,v_r$ to denote the centres of the stars of sizes
$1,1,2,2,\ldots,\lceil\frac{r-1}{2}\rceil,\lceil\frac{r}{2}\rceil$, respectively. Note that the choice of the centres is not uniquely determined,
but we select an appropriate choice arbitrarily. We note that in fact $v_i=v_i(\stars_r)$ for all $i\in [r]$.

For vertices $u,v$, to ease notation we will use $uv$ as a shorthand for $\{u,v\}$.

\begin{proposition}\label{prop:lastvertexgetsjoker}
Suppose $\cA_0 \supset \stars_r$. Then $\{v_r\}^* \subset \cA_{r}$, i.e., $v_r$ becomes a joker after at most $r$ steps.
\end{proposition}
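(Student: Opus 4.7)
The plan is to prove by induction on $i$ that for each $1\le i \le r-1$, the pair $\{v_r, v_{r-i}\}$ has been infected by time $i$. Once this is established, at time $r-1$ the vertex $v_r$ is incident (in the hypergraph of infected pairs $\cA_{r-1}$) to at least $\lceil r/2\rceil$ vertices coming from its own star together with the $r-1$ centres $v_1,\ldots,v_{r-1}$, giving at least $\lceil r/2\rceil + (r-1) \ge r$ distinct infected neighbours of $v_r$. Then for any $w$ with $\{v_r,w\}\notin \cA_{r-1}$ (so in particular $w$ is not one of these neighbours), Claim~\ref{claim:whenpairgetsinfected} yields $\{v_r,w\}\in \cA_r$ because there are already at least $r$ extensions coming from the infected neighbours of $v_r$ alone, and thus $\{v_r\}^*\subset \cA_r$.

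For the base case $i=1$, the extension set of $\{v_r,v_{r-1}\}$ in $\cA_0$ is the disjoint union of the $\lceil r/2\rceil$ non-centre vertices of the star at $v_r$ and the $\lceil (r-1)/2\rceil$ non-centre vertices of the star at $v_{r-1}$, giving exactly $r$ extensions and hence $\{v_r,v_{r-1}\}\in \cA_1$ by Claim~\ref{claim:whenpairgetsinfected}. For the inductive step, suppose that $\{v_{r-1},\ldots,v_{r-i}\}$ are all neighbours of $v_r$ in $\cA_i$ (and that $v_{r-i-1}$ is not, otherwise there is nothing to prove). Then the extension set of $\{v_r,v_{r-i-1}\}$ in $\cA_i$ contains the pairwise-disjoint union of the non-centre vertices of $v_r$'s star, the $i$ vertices $v_{r-1},\ldots,v_{r-i}$, and the non-centre vertices of $v_{r-i-1}$'s star, giving at least $\lceil r/2\rceil + i + \lceil (r-i-1)/2\rceil$ extensions. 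Using $\lceil r/2\rceil + \lceil (r-i-1)/2\rceil \ge (2r-i-1)/2$, this is at least $(2r+i-1)/2 \ge r$ for $i\ge 1$, so the pair meets the threshold and is infected at step $i+1$.

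The only delicate point is the disjointness bookkeeping in the induction: one must verify that $N_0(v_r)$, $\{v_{r-1},\ldots,v_{r-i}\}$ and $N_0(v_{r-i-1})$ are pairwise disjoint and do not contain $v_r$ or $v_{r-i-1}$, which follows immediately from the vertex-disjointness of the stars in $\stars_r$ together with the fact that each centre $v_j$ is distinct from every non-centre vertex of every star. The degenerate case $r=1$, where the induction is vacuous, is handled by direct inspection: the threshold is $1$ and the single initial pair $\{v_1,w_{1,1}\}$ provides one extension for every pair containing $v_1$, so $\{v_1\}^*\subset \cA_1$.
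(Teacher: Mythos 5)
Your proposal is correct and follows essentially the same strategy as the paper's proof: induct on $i$ to show $\{v_r,v_{r-i}\}\in\cA_i$ by counting extensions from $v_r$'s own leaves, the already-acquired centres $v_{r-1},\ldots,v_{r-i}$, and the leaves of $v_{r-i-1}$'s star, and then observe that $v_r$ has at least $r$ infected neighbours by time $r-1$, which forces $v_r^*\subset\cA_r$. The only cosmetic differences are that you shift the induction index by one (so your base case $i=1$ does the real work that the paper defers to the first inductive step) and you unpack Proposition~\ref{prop:getfullstar} explicitly via Claim~\ref{claim:whenpairgetsinfected} rather than citing it; also ``exactly $r$ extensions'' in your base case should read ``at least $r$,'' since $\cA_0$ may strictly contain $\stars_r$, but this does not affect the argument.
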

\begin{proof}
We first claim that after step $i-1$ of the $(r,\cA_0)$-infection process we have that $v_r v_{i'}\in \cA_{i-1}$ for all $r-i<i'< r$
and we prove this claim by induction on $i\in[r]$. For $i=1$ the statement is empty and therefore holds trivially.
Suppose the claim is true for $i\ge 1$ and we aim to prove the same statement for $i+1$, for which it suffices to show that $v_{r-i}v_r \in \cA_i$.
For $q\in [r]$,
let $L_q$ denote the set of leaves of the star $\cS_q\subset\stars_r$ with centre~$v_q$. Then
\[
\extset_{\cA_{i-1}}(v_{r-i}v_r)\supset \left(L_{r-i}\cup L_r\cup \{v_{r-i+1},\ldots,v_{r-1}\}\right).
\]
Since $|L_q|=\lceil\frac{q}{2}\rceil$ for $q\in [r]$, we have 
\[
|\extset_{\cA_{i-1}}(v_{r-i}v_r)|
\ge \left\lceil\frac{r-i}{2}\right\rceil+\left\lceil\frac{r}{2}\right\rceil+(r-1)-(r-i)
=r+\frac{i}{2}-1+\frac{1}{2}\left(\mathbbm{1}_{r-i\in 2\NN-1}+\mathbbm{1}_{r\in 2\NN-1}\right)\ge r.
\]
It follows by Claim~\ref{claim:whenpairgetsinfected} that $v_{r-i}v_r\in \cA_{i}$ which proves the statement of our claim for $i+1$,
and thus we have proved the claim by induction.

In particular, the case $i=r$ implies that $d_{\cA_{r-1}}(v_r)\ge |\{v_1,\ldots,v_{r-1}\} \cup L_r| =  r-1+\lceil\frac{r}{2}\rceil\ge r$, 
and by Proposition~\ref{prop:getfullstar} we conclude that $\{v_r\}^*\subset \cA_{r}$.
\end{proof}

We can use Proposition~\ref{prop:lastvertexgetsjoker} to show that all $v_i\in\stars_r$ will eventually become jokers in the 
infection process starting from $\stars_r$.
\begin{corollary}\label{cor:alljokers}
Suppose that $\cA_0\supset\stars_r$ and let $R\coloneqq\{v_1,\ldots,v_r\}$. Then there exists a $T\in\NN$ such that $\bigcup_{s \in [r]}v_s^*\subset \cA_T$, 
i.e., $v_1,\ldots,v_r$ will become jokers by some time $T$ in the $(r,\cA_0)$-infection process.
\end{corollary}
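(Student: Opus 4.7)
The plan is to induct on $r$, using Proposition~\ref{prop:lastvertexgetsjoker} to acquire one joker at each stage and the reduction lemma (Lemma~\ref{lem:reductionlemma}) to peel off that joker and recurse on $r-1$. The base case $r=1$ is essentially immediate: $\stars_1$ is a single pair $\{v_1,u\}$, so with threshold $r=1$ every pair $\{v_1,w\}$ with $w\notin\{v_1,u\}$ is infected in one step via the edge $\{v_1,w,u\}$ and the already-infected pair $\{v_1,u\}$, giving $v_1^*\subset\cA_1$ and allowing the choice $T=1$.

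For the inductive step, I would first invoke Proposition~\ref{prop:lastvertexgetsjoker} to obtain $v_r^*\subset\cA_r$. The key structural observation is that, since the stars comprising $\stars_r$ are vertex-disjoint, the pairs of $\stars_r$ containing $v_r$ are precisely those of the largest star (the one centred at $v_r$); removing them leaves stars of sizes $1,1,2,2,\ldots,\lceil(r-1)/2\rceil$ centred at $v_1,\ldots,v_{r-1}$, which is precisely a copy of $\stars_{r-1}$. Applying Lemma~\ref{lem:reductionlemma} with $S=\{v_r\}$ to the process restarted at time $r$ then yields a reduced $(r-1)$-threshold process in $\cK_n^k-\{v_r\}$ whose initial configuration $\cC_0=\cA_r-\{v_r\}$ contains this copy of $\stars_{r-1}$. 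By the inductive hypothesis, applied inside $\cK_n^k-\{v_r\}$, there exists a time $T'$ at which all of $v_1,\ldots,v_{r-1}$ are jokers for $\cC_{T'}$ in $[n]\setminus\{v_r\}$.

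To lift these jokers back to the original process I would use the first part of Lemma~\ref{lem:reductionlemma}, namely $\cC_{T'}\subset\cA_{r+T'}-\{v_r\}$, which places every pair $\{v_i,w\}$ with $w\neq v_r$ into $\cA_{r+T'}$; the single remaining pair $\{v_i,v_r\}$ is already in $v_r^*\subset\cA_r\subset\cA_{r+T'}$, so each $v_i$ with $i<r$ is a joker at time $T\coloneqq r+T'$. Combined with $v_r^*\subset\cA_T$, this delivers $\bigcup_{s\in[r]}v_s^*\subset\cA_T$, closing the induction.

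The main obstacle I expect is the bookkeeping between the original and reduced processes -- in particular, making sure that jokers in the reduced process correspond to jokers in the original, and that the acquired joker $v_r$ is correctly used to ``cover'' the pairs involving $v_r$ that the reduced process by construction cannot see. This is precisely what the reduction lemma was designed for, and the equality $\cC_t=\cB_t-\{v_r\}$ available in the tight case $j=k-1$ makes the transition transparent, so no further technical work is needed beyond identifying the correct configuration $\stars_{r-1}$ inside $\stars_r-\{v_r\}$.
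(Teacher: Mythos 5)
Your proof is correct and follows essentially the same inductive strategy as the paper: acquire the joker $v_r$ via Proposition~\ref{prop:lastvertexgetsjoker}, peel it off using Lemma~\ref{lem:reductionlemma}, observe that what remains of $\stars_r$ is a copy of $\stars_{r-1}$, and apply the inductive hypothesis in $\cK_n^k-\{v_r\}$. The only cosmetic difference is the base case ($r=1$ rather than the paper's vacuous $r=0$), and you spell out the lifting step that the paper dismisses as clear; both are fine.
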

\begin{proof}
We prove the statement by induction on $r$. In the case $r=0$, the statement is empty and therefore holds trivially,
so suppose the statement is true for $r-1$. By Proposition~\ref{prop:lastvertexgetsjoker}
we have $v_r^*\subset \cA_r$, i.e., $v_r$ becomes a joker after at most $r$ steps. By Lemma~\ref{lem:reductionlemma} we can subsequently analyse
the equivalent $(r-1,\cA_r-\{v_r\})$-infection process on $\cK_n^3-\{v_r\}$ (the reduction gives an equivalence since we are in the tight case).
Since $\stars_{r-1} \subset (\cA_0 -\{v_r\}) \subset (\cA_r - \{v_r\})$,
we apply the induction hypothesis and deduce that there exists an integer $t$ such that $v_1,\ldots,v_{r-1}$
become jokers for $\cA_{r+t}$ in $[n]\setminus\{v_r\}$. Since $v_r$ is a joker in $[n]$ clearly $v_1,\ldots,v_{r-1}$ also become jokers in $[n]$
after at most $r+t$ steps.
\end{proof}

We can now deduce Theorem~\ref{thm:maintheorem}\ref{item:exactj}. 

\begin{lemma}\label{lem:upperboundspecialtight}
Suppose $2=j=k-1$. The $2$-configuration $\stars_r$ is contagious with infection threshold $r$.
In particular,
\[\ell_n(3,2,r)= |\stars_r|=\frac{1}{4}\left((r+1)^2-\mathbbm{1}_{r\in 2\NN}\right).\]
\end{lemma}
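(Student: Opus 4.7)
The plan is to combine the preceding results. First I would invoke Corollary~\ref{cor:alljokers} with $\cA_0 = \stars_r$: it provides a time $T$ after which each of the $r$ centres $v_1,\ldots,v_r$ is a joker for $\cA_T$. Since these are $r$ distinct jokers and we are in the tight case $2 = j = k - 1$, Claim~\ref{claim:sufficientforperc} then yields $\cA_{T+1} = \binom{[n]}{2}$, so $\stars_r$ is contagious in $\cK_n^3$ and thus $\ell_n(3,2,r) \le |\stars_r|$.

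Second, I would compute $|\stars_r|$ directly from Definition~\ref{def:optimalstars}, which gives
$$|\stars_r| = \sum_{i=1}^{r} \left\lceil \frac{i}{2} \right\rceil.$$
Splitting on parity: when $r = 2m$ is even, pairing consecutive terms yields $2(1 + 2 + \cdots + m) = m(m+1) = \frac{(r+1)^2 - 1}{4}$; when $r = 2m+1$ is odd, we obtain $m(m+1) + (m+1) = (m+1)^2 = \frac{(r+1)^2}{4}$. Both cases combine to $\frac{1}{4}\bigl((r+1)^2 - \mathbbm{1}_{r\in 2\NN}\bigr)$, which, together with the matching lower bound from Lemma~\ref{lem:lowerboundspecialtight}, yields the claimed equality for $\ell_n(3,2,r)$.

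There is no serious obstacle remaining: the substantive work --- showing that successive centres can be promoted to jokers in the $(r,\stars_r)$-infection process --- was already carried out in Proposition~\ref{prop:lastvertexgetsjoker} and Corollary~\ref{cor:alljokers}, while the lower bound was handled in Lemma~\ref{lem:lowerboundspecialtight}. The elegance of the argument lies precisely in the fact that the carefully chosen star sizes $1, 1, 2, 2, \ldots, \lceil r/2 \rceil$ produce a configuration whose cardinality exactly matches the lower bound, certifying that $\stars_r$ is an extremal example.
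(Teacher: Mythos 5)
Your proposal is correct and follows exactly the same route as the paper: invoke Corollary~\ref{cor:alljokers} to make all $r$ centres jokers, apply Claim~\ref{claim:sufficientforperc} to finish percolation, compute $|\stars_r|=\sum_{i=1}^r\lceil i/2\rceil$, and cite Lemma~\ref{lem:lowerboundspecialtight} for the matching lower bound. The only difference is that you spell out the parity-splitting computation of the sum, which the paper leaves implicit.
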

\begin{proof}
Set $\cA_0\coloneqq \stars_r$ and apply Corollary~\ref{cor:alljokers} to deduce that $v_1,\ldots,v_r$ are jokers after some time~$T$. By Claim~\ref{claim:sufficientforperc} we then have that $\cA_{T+1}=\binom{[n]}{j}$. 

It follows that  
\[\ell_n(3,2,r)\le|\stars_r|=1+1+\ldots+\left\lceil \frac{r-1}{2}\right\rceil+\left\lceil \frac{r}{2}\right\rceil=\frac{1}{4}\left((r+1)^2-\mathbbm{1}_{r\in 2\NN}\right).\]
The corresponding lower bound is given by Lemma~\ref{lem:lowerboundspecialtight}.
\end{proof}

\section{General tight case: $k\ge 4$}\label{sec:generaltight}

We now proceed with the proof of Theorem~\ref{thm:maintheorem}\ref{item:largej}.
We first need to introduce some new notation which generalises the notion of an $i$-configuration.

\begin{definition}
Let $m,i,s\in\NN$. An $(m;i,s)$-configuration is an $i$-configuration which is contagious with infection threshold $s$ in $\cK_m^{i+1}$.
\end{definition}
Our next definition describes a way of augmenting a given configuration by a vertex.
\begin{definition}\label{def:augmentation}
Given an $(i-1)$-configuration $\cB$ and some vertex $v\not\in V[\cB]$, the \emph{$v$-augmented configuration}
$\cB_v$ is the $i$-configuration that results from $\cB$ by adding $v$ to each $(i-1)$-set of $\cB$ to create an $i$-set.
We call $v$ the \emph{master vertex} of the $v$-augmented configuration. 
\end{definition}

\begin{center}
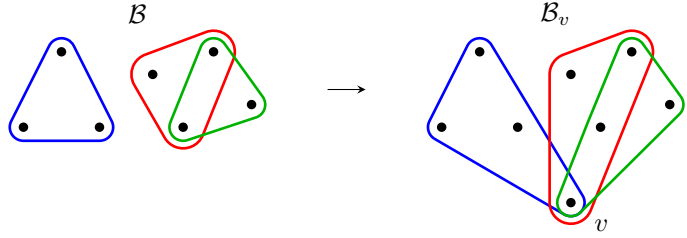
\begin{figure}[h]
\captionsetup{justification=centering,margin=2cm}
\begin{tikzpicture}[scale=1]
\vertex{.03cm}{black} at (0,0) (00) {};
\vertex{.03cm}{black} at (1,0) (10) {};
\vertex{.03cm}{black} at (0.5,1) (051) {};
\vertex{.03cm}{black} at (1.7,0.7) (305) {};
\vertex{.03cm}{black} at (2.5,1) (351) {};
\vertex{.03cm}{black} at (2.1,0) (350) {};
\vertex{.03cm}{black} at (3,0.3) (405) {};
\draw [-stealth](4,0.5) -- (4.5,0.5);

\vertex{.03cm}{black} at (5.5,0) (550) {};
\vertex{.03cm}{black} at (6.5,0) (650) {};
\vertex{.03cm}{black} at (6,1) (61) {};
\vertex{.03cm}{black} at (7.2,0.7) (8505) {};
\vertex{.03cm}{black} at (8,1) (91) {};
\vertex{.03cm}{black} at (7.59,0) (90) {};
\vertex{.03cm}{black} at (8.5,0.3) (9505) {};
\vertex{.03cm}{black} at (7.2,-1) (v) {};
\drawThreeHyperedge{051}{10}{00}{blue}{.18cm};
\drawThreeHyperedge{351}{350}{305}{red}{.28cm};
\drawThreeHyperedge{350}{351}{405}{darkgreen}{.18cm};

\drawThreeHyperedge{550}{61}{v}{blue}{.18cm};
\drawThreeHyperedge{91}{v}{8505}{red}{.28cm};
\drawFourHyperedge{v}{90}{91}{9505}{darkgreen}{.18cm};

  \node at (7.6,-1.3) {\small \color{black} $v$};
  \node at (1.5,1.5) {\small \color{black} $\cB$};
  \node at (7,1.5) {\small \color{black} $\cB_v$};

\end{tikzpicture}
\caption{The augmentation of a $3$-configuration $\cB$ by $v$.}
\end{figure}
\end{center}

Given a set $S$ of vertices, let $\configs_{i,S}$ be the set of $i$-configurations disjoint from $S$,
by which we mean every $i$-set of the $i$-configuration is disjoint from $S$.
If $S=\{v\}$ then we define $\configs_{i,v}\coloneqq\configs_{i,\{v\}}$ for ease of notation.
Observe that the mapping $\cB \mapsto \cB_v$ defines an \emph{augmentation function} $\configs_{i-1,v} \to \configs_{i,\emptyset}$.
Note that for $\cA,\cB\in\configs_{i-1,v}$ we have $(\cA\cup \cB)_v=\cA_v\cup \cB_v$.

Our next lemma couples two infection processes which differ in the augmentation of the initially infected sets.

\begin{lemma}\label{lem:inclusionpropf}
Let $v$ be a vertex and let $\cC_0\in\configs_{j-1,v}$ and $\cC_0'\in\configs_{j,\emptyset}$. Let $\cC_t$ be the set of infected $(j-1)$-sets 
after $t$ steps in the $(r,\cC_0)$-infection process in $\cK_n^{k-1}- \{v\}$
and let $\cC_t'$ be defined similarly for the $(r,\cC_0')$-infection process in $\cK_n^{k}$. 
If $(\cC_0)_v\subset \cC_0'$ then $(\cC_t)_v\subset \cC_t'$ for all $t\in\NN$.
\end{lemma}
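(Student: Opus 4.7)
The plan is to prove the inclusion by induction on $t$, where the base case $t=0$ is exactly the hypothesis $(\cC_0)_v \subset \cC_0'$. For the inductive step, I would take any $(j-1)$-set $J \in \cC_t$ (note $v \notin J$ since the process for $\cC_t$ lives on $[n] \setminus \{v\}$) and show $J \cup \{v\} \in \cC_t'$.

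If $J \in \cC_{t-1}$ already, the induction hypothesis immediately gives $J \cup \{v\} \in (\cC_{t-1})_v \subset \cC_{t-1}' \subset \cC_t'$. Otherwise $J$ is newly infected at step $t$, so by Definition~\ref{def:infection} there exist distinct $(k-1)$-edges $K_1, \ldots, K_r$ in $\cK_n^{k-1} - \{v\}$ and distinct $(j-1)$-sets $J_1, \ldots, J_r \in \cC_{t-1}$ with $(J_i \cup J) \subset K_i$ for all $i \in [r]$. The key idea is to augment everything by $v$: set $K_i' \coloneqq K_i \cup \{v\}$ (a $k$-set, hence an edge of $\cK_n^k$) and $J_i' \coloneqq J_i \cup \{v\}$ (a $j$-set).

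By the inductive hypothesis, $J_i' = (J_i)_v \in (\cC_{t-1})_v \subset \cC_{t-1}'$ for each $i$. Since $v \notin K_i$ and the $K_i$ are pairwise distinct, the $K_i'$ are also pairwise distinct; the same argument shows the $J_i'$ are pairwise distinct. Finally, $J_i' \cup (J \cup \{v\}) = (J_i \cup J) \cup \{v\} \subset K_i \cup \{v\} = K_i'$, so the tuple $(K_i', J_i')_{i=1}^r$ witnesses $J \cup \{v\} \in \cC_t'$ via Definition~\ref{def:infection}, completing the induction.

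There is no real obstacle here; the proof is essentially a direct verification that augmentation by $v$ commutes with one step of infection in the sense of an inclusion. The only thing worth being careful about is that the witnesses for $J$'s infection lie in the $v$-free hypergraph $\cK_n^{k-1}-\{v\}$, which is precisely what makes adding $v$ well-defined and automatically preserves distinctness of the edges and $(j-1)$-sets after augmentation.
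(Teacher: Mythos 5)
Your proof is correct and follows essentially the same route as the paper's: induction on $t$, and for a newly infected $J$, augmenting the witnessing edges $K_i$ and $(j-1)$-sets $J_i$ by $v$ to produce witnesses for $J\cup\{v\}$ in $\cK_n^k$. The paper phrases the step in terms of the newly-infected sets $\cN_t$ and $\cN_t'$, but the content is identical.
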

\begin{proof}
We prove the statement by induction on $t$. The case $t=0$ follows by assumption.
Suppose that $(\cC_{t-1})_v\subset \cC_{t-1}'$ for some $t\in\NN$ and let $\cN_t\coloneqq \cC_{t}\setminus \cC_{t-1}$
and $\cN_t'\coloneqq \cC_{t}'\setminus \cC_{t-1}'$.
We need to show that $(\cC_{t-1}\cup \cN_t)_v\subset \cC_{t}'$.
It suffices to show that $(\cN_t)_v\subset (\cC_{t-1}'\cup \cN_t')$ because then by the induction hypothesis 
\[
(\cC_t)_v=(\cC_{t-1}\cup \cN_t)_v=((\cC_{t-1})_v\cup (\cN_t)_v)\subset (\cC_{t-1}'\cup \cN_{t}')=\cC_{t}'.
\]
Let $J\in \cN_t$, and we aim to show that $J \cup \{v\} \in \cC_{t-1}'\cup \cN_t'$.
Assume that $J\cup \{v\} \notin \cC_{t-1}'$, otherwise we are done,
and let $K_1,\ldots,K_r \in \binom{[n]\setminus \{v\}}{k-1}$ and $J_1,\ldots,J_r \in \cC_{t-1}$
be the $(k-1)$-sets and $(j-1)$-sets which infected $J$ in step $t$.
Let $J_i' = J_i\cup \{v\}$ and $K_i' = K_i \cup \{v\}$. It is easy to check that these are distinct $k$-sets and $j$-sets (since the $K_i$ and $J_i$ are distinct).
Since $J_i \in \cC_{t-1}$ we have $J_i' \in \cC_{t-1}'$ by the induction hypothesis, so $J\cup\{v\}\in \cN_t'$.
Since $J$ was chosen arbitrarily, this concludes the proof. 
\end{proof}

\begin{corollary}\label{cor:lastonejoker}
Let $\cB\in\configs_{j-1,v}$ be an $(n-1;j-1,r)$-configuration.
If $\cB_v\subset \cA_0$ then there exists a $t\in\NN$ such that $v^*\subset \cA_t$ in the $(r,\cA_0)$-infection process.
\end{corollary}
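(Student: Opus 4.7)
My plan is to combine the assumption that $\cB$ is contagious at one level lower with the coupling provided by Lemma~\ref{lem:inclusionpropf}. Because we are in the tight case $j=k-1$, the hypergraph $\cK_n^{k-1}-\{v\}$ coincides (up to relabelling) with $\cK_{n-1}^{j}$, so the hypothesis that $\cB$ is an $(n-1;j-1,r)$-configuration tells us precisely that the $(r,\cB)$-infection process on $(j-1)$-sets run in $\cK_n^{k-1}-\{v\}$ eventually infects every $(j-1)$-subset of $[n]\setminus\{v\}$. I would first fix a time $T$ at which this lower-order process completes, writing $\cC_t$ for its $t$-th iterate, so that $\cC_T=\binom{[n]\setminus\{v\}}{j-1}$.

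Next I would invoke Lemma~\ref{lem:inclusionpropf} with $\cC_0\coloneqq \cB\in\configs_{j-1,v}$ and $\cC_0'\coloneqq \cA_0\in\configs_{j,\emptyset}$. The required hypothesis $(\cC_0)_v=\cB_v\subset \cA_0=\cC_0'$ is immediate from the assumption of the corollary, so the lemma yields $(\cC_t)_v\subset \cA_t$ for every $t\in\NN$, and in particular $(\cC_T)_v\subset \cA_T$.

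Finally, I would observe that $v$-augmenting the complete collection $\binom{[n]\setminus\{v\}}{j-1}$ produces precisely the family $v^*$ of all $j$-subsets of $[n]$ containing $v$; that is, $(\cC_T)_v=v^*$. Combining this identity with the inclusion from the previous step gives $v^*\subset \cA_T$, as required. There is no real obstacle here: the heavy lifting has already been done in Lemma~\ref{lem:inclusionpropf}, and the corollary is essentially a matter of bookkeeping. The only minor point to double-check is the identification of $\cK_n^{k-1}-\{v\}$ with a complete $j$-uniform hypergraph on $n-1$ vertices, which is immediate since $k-1=j$ in the tight case.
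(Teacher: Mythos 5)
Your proof is correct and matches the paper's own (very terse) argument: identify $\cK_n^{k-1}-\{v\}$ with $\cK_{n-1}^j$, use the $(n-1;j-1,r)$-definition to get a time $T$ at which every $(j-1)$-subset of $[n]\setminus\{v\}$ is infected, apply Lemma~\ref{lem:inclusionpropf} with $\cC_0=\cB$ and $\cC_0'=\cA_0$ to get $(\cC_T)_v\subset\cA_T$, and observe $(\cC_T)_v=v^*$. You have merely spelled out the bookkeeping that the paper leaves implicit.
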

\begin{proof}
By assumption the configuration $\cB$ percolates in $[n]\backslash\{v\}$, hence there is a $t$ at which every $(j-1)$-set is infected 
in $[n]\backslash\{v\}$ and the statement follows by Lemma~\ref{lem:inclusionpropf}.
\end{proof}

\begin{proposition}\label{prop:mastertojoker}
For $s\in[r]$ suppose that we have $(n-r;j-1,s)$-configurations $\cB_s$ and a set of $r$ distinct vertices $R=\{v_1,\ldots,v_r\}$
which is disjoint from every $\cB_s$. Let $\cA\coloneqq\bigcup\limits_{s\in[r]}(\cB_s)_{v_s}$ and suppose $\cA \subset \cA_0$.
Then there exists $T\in\NN$ such that $\bigcup_{s\in [r]}v_s^* \subset \cA_T$, i.e., $v_1,\ldots,v_r$ are jokers at time $T$ in the $(r,\cA_0)$-infection process.
\end{proposition}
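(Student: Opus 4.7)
My plan is to prove the proposition by induction on $r$. The base case $r=1$ is immediate from Corollary~\ref{cor:lastonejoker}: the hypothesis gives $\cB_1$ as an $(n-1;j-1,1)$-configuration with $(\cB_1)_{v_1}\subset \cA_0$, which is precisely the input to that corollary.

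For the inductive step my strategy is to first show that $v_r$ becomes a joker, then invoke Lemma~\ref{lem:reductionlemma} to peel off $v_r$ and apply induction to the reduced process. To show $v_r$ becomes a joker I would apply Lemma~\ref{lem:inclusionpropf} with $v=v_r$, $\cC_0=\cB_r$, and $\cC_0'=\cA_0$; the hypothesis $(\cB_r)_{v_r}\subset \cA_0$ holds by construction of $\cA$. This reduces the task to showing that the $(r,\cB_r)$-infection process in $\cK_n^{k-1}-\{v_r\}$ percolates all $(j-1)$-sets of $[n]\setminus\{v_r\}$.

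Since $\cB_r$ is $(n-r;j-1,r)$-contagious, the subset $\binom{[n]\setminus R}{j-1}$ becomes infected by monotonicity. To percolate the remaining $(j-1)$-sets of $[n]\setminus\{v_r\}$ I plan a sub-induction on $i\coloneqq|J\cap(R\setminus\{v_r\})|$. For $i\ge 1$, picking any $u\in J\cap(R\setminus\{v_r\})$ and letting $w$ range over $([n]\setminus R)\setminus J$, each set $(J\setminus\{u\})\cup\{w\}$ has only $i-1$ vertices in $R\setminus\{v_r\}$ and so is already infected by the sub-inductive hypothesis; this yields at least $n-r-(j-1-i)\ge r$ distinct extensions for $J$ (for $n$ sufficiently large), so $J$ becomes infected by Claim~\ref{claim:whenpairgetsinfected}.

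Once $v_r$ is a joker at some time $T_1$, Lemma~\ref{lem:reductionlemma} (using the equivalence available when $j=k-1$) identifies the subsequent $(r,\cA_{T_1})$-process in $\cK_n^k$ with a $(r-1,\cA_{T_1}-\{v_r\})$-process in $\cK_n^k-\{v_r\}$. Each $\cB_s$ with $s\in[r-1]$ is an $(n-r;j-1,s)=((n-1)-(r-1);j-1,s)$-configuration, $\bigcup_{s<r}(\cB_s)_{v_s}$ avoids $v_r$ and lies in $\cA_{T_1}-\{v_r\}$, so the hypotheses of the proposition are met with $r-1$ and $n-1$ in place of $r$ and $n$. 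The inductive hypothesis then makes $v_1,\ldots,v_{r-1}$ jokers in the reduced process; they remain jokers in the original process because every $j$-set containing some $v_s$ and $v_r$ already belongs to $\cA_{T_1}$ by the joker property of $v_r$. The main technical obstacle is the sub-induction extending percolation of $\cB_r$ from $\cK_{[n]\setminus R}^j$ to $\cK_{[n]\setminus\{v_r\}}^j$; the rest is routine bookkeeping with the two reduction lemmas.
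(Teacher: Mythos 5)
Your proof is correct and follows essentially the same strategy as the paper: induct on $r$, use Lemma~\ref{lem:inclusionpropf} (equivalently Corollary~\ref{cor:lastonejoker}) to make $v_r$ a joker, then apply Lemma~\ref{lem:reductionlemma} to reduce to the $(r-1)$-threshold process on $\cK_n^k-\{v_r\}$ and invoke the inductive hypothesis. The one place you do more work than the paper is in your sub-induction on $i=|J\cap(R\setminus\{v_r\})|$: the paper applies Corollary~\ref{cor:lastonejoker}, which is stated for an $(n-1;j-1,r)$-configuration, directly to $\cB_r$, which by hypothesis is only an $(n-r;j-1,r)$-configuration; the implicit step that percolation in $\cK^{j}$ on $[n]\setminus R$ extends to percolation on the larger vertex set $[n]\setminus\{v_r\}$ is left to the reader. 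Your sub-induction makes that monotonicity precise, which is a genuine (if small) addition; otherwise the two arguments match.
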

\begin{proof}
We prove the statement by induction on $r$. The proof is very similar to the proof of Corollary~\ref{cor:alljokers} and therefore we will only sketch the proof.
The case $r=0$ holds trivially since it is an empty statement, so suppose the statement holds for $r-1$.
By Corollary~\ref{cor:lastonejoker} there exists $t_0$ with $\{v_r\}^*\subset \cA_{t_0}$. We observe that 
$
\bigcup\limits_{i=1}^{r-1}(\cB_i)_{v_i}\subset (\cA-\{v_r\}) \subset (\cA_{t_0}-\{v_r\})
$
and so by the induction hypothesis the vertices in $R\setminus \{v_r\}$ become jokers by some time $t_1$
in the $(r-1,\cA_{t_0}-\{v_r\})$-infection process in $\cK_n^k -\{v_r\}$.
By Lemma~\ref{lem:reductionlemma} (and the fact that $v_r$ is also a joker),
these vertices are also jokers in $\cA_{t_0+t_1}$ in the $(r,\cA_0)$-infection process in $\cK_n^k$, as required.
\end{proof}

\begin{corollary}\label{cor:candidateconstructionpercolates}
Let $\cA$ be as in Proposition~\ref{prop:mastertojoker} and suppose that $\cA\subset \cA_0$. Then the $(r,\cA_0)$-infection process percolates.
\end{corollary}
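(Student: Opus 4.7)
The plan is to chain together the two results already established at this point in the paper. Proposition~\ref{prop:mastertojoker} gives exactly the setup needed: because $\cA = \bigcup_{s\in[r]}(\cB_s)_{v_s} \subset \cA_0$, there is some time $T \in \NN$ at which each of the master vertices $v_1,\ldots,v_r$ has become a joker for $\cA_T$, i.e., $\bigcup_{s\in[r]} v_s^* \subset \cA_T$.

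At that point we are in the tight regime $j = k-1$ with $r$ distinct jokers available, which is precisely the hypothesis of Claim~\ref{claim:sufficientforperc}. Applying that claim yields $\cA_{T+1} = \binom{[n]}{j}$, so the process percolates.

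There is essentially no obstacle here: the corollary is a direct composition of Proposition~\ref{prop:mastertojoker} (which does the real work of propagating jokers from the augmented starting configuration) with Claim~\ref{claim:sufficientforperc} (which converts $r$ jokers into full percolation in one further step). The only thing worth flagging is that $v_1,\ldots,v_r$ are distinct by assumption, so they do indeed furnish the $r$ distinct jokers required by Claim~\ref{claim:sufficientforperc}, and that we are in the case $2 \le j = k-1$ so both cited results apply.
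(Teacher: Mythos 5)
Your proof is correct and matches the paper's own argument exactly: apply Proposition~\ref{prop:mastertojoker} to obtain the $r$ jokers $v_1,\ldots,v_r$ at some time $T$, then invoke Claim~\ref{claim:sufficientforperc} to conclude percolation at time $T+1$. The extra sanity checks you flag (distinctness of the $v_s$ and being in the regime $2\le j=k-1$) are reasonable to mention but do not change the argument.
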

\begin{proof}
By Proposition~\ref{prop:mastertojoker} there exists a time $T$ such that $v_1,\ldots,v_r$ are jokers at time $T$. By Claim~\ref{claim:sufficientforperc} the statement follows. 
\end{proof}

\begin{corollary}\label{lem:recursionformula}
If $3\le j=k-1$, then 
\[\ell_n(k,j,r)\le\sum_{i=1}^{r}\ell_n(k-1,j-1,i).\]
\end{corollary}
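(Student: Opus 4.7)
The plan is to apply Corollary~\ref{cor:candidateconstructionpercolates} to an optimally chosen initial configuration. The heavy lifting (namely, that the augmented union percolates) has already been done, so what remains is essentially an accounting exercise.

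First, I would fix an arbitrary set $R=\{v_1,\ldots,v_r\}$ of $r$ distinct vertices in $[n]$, to be used as master vertices. The vertex set $[n]\setminus R$ induces a copy of $\cK_{n-r}^{k-1}$. For each $s\in [r]$, I would then select a minimum-size $(n-r;\,j-1,\,s)$-configuration $\cB_s$ inside this $\cK_{n-r}^{k-1}$; such a $\cB_s$ exists by definition and satisfies $|\cB_s|=\ell_{n-r}(k-1,j-1,s)$. Crucially, since $R$ is chosen disjoint from $[n]\setminus R$, each $\cB_s$ is automatically disjoint from $R$, which is the hypothesis needed for Proposition~\ref{prop:mastertojoker}.

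Next, I would define
\[
\cA_0 \coloneqq \bigcup_{s\in[r]} (\cB_s)_{v_s},
\]
and apply Corollary~\ref{cor:candidateconstructionpercolates} directly to conclude that the $(r,\cA_0)$-infection process in $\cK_n^k$ percolates. To bound $|\cA_0|$, I would observe that the augmented pieces are pairwise disjoint: every $j$-set in $(\cB_s)_{v_s}$ contains $v_s$, but since $R$ is disjoint from $\cB_{s'}$ for every $s'$, no $j$-set in $(\cB_{s'})_{v_{s'}}$ with $s'\ne s$ contains $v_s$. Hence
\[
|\cA_0|=\sum_{s=1}^{r}|(\cB_s)_{v_s}|=\sum_{s=1}^{r}|\cB_s|=\sum_{s=1}^{r}\ell_{n-r}(k-1,j-1,s).
\]

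The one mild obstacle is that the statement uses $\ell_n$ rather than $\ell_{n-r}$ on the right-hand side, so I need $\ell_{n-r}(k-1,j-1,s)\le \ell_n(k-1,j-1,s)$ for $n\ge n_0$. This is justified by the implicit convention that $n_0(k,j,r)$ is chosen large enough to subsume $n_0(k-1,j-1,s)+r$ for all $s\in[r]$, so that $n-r$ still satisfies the hypotheses of the relevant prior statements; moreover, unrolling the recursion eventually lands in case~\ref{item:smallj} or case~\ref{item:exactj} of Theorem~\ref{thm:maintheorem}, whose values are independent of $n$ once $n$ is sufficiently large. This is a routine check rather than a genuine difficulty, and completes the proof.
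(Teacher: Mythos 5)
Your proof is correct and follows essentially the same route as the paper: fix $r$ master vertices, take minimum-size $(n-r;j-1,s)$-configurations $\cB_s$ on the remaining vertices, augment and union them, and invoke Corollary~\ref{cor:candidateconstructionpercolates}. In fact you are slightly more careful than the paper's own write-up, which silently conflates $\ell_n$ with $\ell_{n-r}$ (the $\cB_s$ in Proposition~\ref{prop:mastertojoker} live in $\cK_{n-r}^{k-1}$), whereas you flag this and justify it via the choice of $n_0$; your disjointness check for the pieces $(\cB_s)_{v_s}$ is also a worthwhile explicit detail.
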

\begin{proof}
Let $\cB_s$ and $\cA$ be as in Proposition~\ref{prop:mastertojoker}, and additionally assume that the $\cB_s$ are \emph{minimal} $(n;j-1,i)$-configurations,
so $|\cB_s|=\ell_n(k-1,j-1,s)$. By Corollary~\ref{cor:candidateconstructionpercolates} we have that $\cA$ percolates and since
\[
\ell_n(k,j,r)\le|\cA|=\sum_{s=1}^{r}|\cB_s|=\sum_{s=1}^{r}\ell_n(k-1,j-1,s),
\]
the statement follows.
\end{proof}

\section{Proof of Theorem~\ref{thm:maintheorem}}
\label{sec:proofofmainthm}

We observe that we have now proved
all the individual statements of our main theorem.

\begin{proof}[Proof of Theorem~\ref{thm:maintheorem}]
Statement~\ref{item:smallj} is simply the ``in particular'' part of Lemma~\ref{lem:mostcases} and
similarly~\ref{item:exactj} is contained in Lemma~\ref{lem:upperboundspecialtight}.
Finally~\ref{item:largej} is precisely the statement of Corollary~\ref{lem:recursionformula}.
\end{proof}

\section{Concluding Remarks}\label{sec:remarks}
If we iterate the sum in Theorem~\ref{thm:maintheorem}\ref{item:largej} $k-3$ times and then replace every summand with the 
expression given in Theorem~\ref{thm:maintheorem}\ref{item:exactj} we obtain 
\[\ell_n(k,j,r)\le\frac{2r^2+r(5k-11)-17(k-1)+4k^2}{4(k-1)!}(r)_{k-3}-\frac{1}{4}\sum_{i=1}^{\left\lfloor\frac{r}{2}\right\rfloor}\binom{r+k-3-2i}{k-4}.\] 
Note that this expression is a polynomial in $r$ of degree $k-1$. 
However, this bound is far from best possible in general.

\begin{claim}\label{claim:counterexample}
If $j=k-1$ and $\cA_0 \subset \binom{[n]}{j}$ is isomorphic to $\cK_{j+r-1}^j$,
then the $(r,\cA_0)$-infection process in $\cK_n^k$ percolates. In particular, $\ell_n(j+1,j,r) \le \binom{j+r-1}{j}$.
\end{claim}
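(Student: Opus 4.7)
The plan is to run a simple induction on the distance from the initial clique. Write $\cA_0 = \binom{W}{j}$, where $W \subset [n]$ has $|W| = j+r-1$, and for each $j$-set $J \subset [n]$ set $s(J) \coloneqq |J \setminus W|$. I will show by induction on $s \ge 0$ that every $j$-set $J$ with $s(J) = s$ lies in $\cA_s$; this immediately yields percolation (in fact in at most $j$ steps, for $n$ large enough).

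The base case $s = 0$ is trivial because such a $J$ is already a member of $\binom{W}{j} = \cA_0$. For the inductive step, fix a $j$-set $J$ with $s(J) = s \ge 1$ and decompose it as $J = J_W \cup U$, where $J_W = J \cap W$ has size $j-s$ and $U = J \setminus W$ has size $s$. The key observation is that for every $v \in W \setminus J_W$ and every $w \in U$, the $j$-set $J' \coloneqq (J \setminus \{w\}) \cup \{v\}$ has exactly $s-1$ vertices outside $W$, hence lies in $\cA_{s-1}$ by the inductive hypothesis. Since $v \in W$ while $U \subset [n]\setminus W$, we have $v \notin J$, so $J \cup \{v\}$ is a genuine $k$-element edge of $\cK_n^k$, and $J' \in \cA_{s-1}\setminus\{J\}$ witnesses (via the choice of $w$) that $v$ is an extension of $J$ with respect to $\cA_{s-1}$ in the sense of Definition~\ref{def:extension}. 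Counting, $|\extset_{\cA_{s-1}}(J)| \ge |W \setminus J_W| = (j+r-1)-(j-s) = r+s-1 \ge r$, so Claim~\ref{claim:whenpairgetsinfected} gives $J \in \cA_s$, completing the induction.

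There is essentially no obstacle here; the only subtlety is checking that the candidate extensions $v \in W \setminus J_W$ are automatically distinct from the vertices of $J$ they ``replace,'' which holds because $v \in W$ while the replaced vertex $w$ lies outside $W$. The bound $\ell_n(j+1,j,r) \le \binom{j+r-1}{j}$ then follows from $|\cA_0| = \binom{j+r-1}{j}$.
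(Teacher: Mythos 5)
Your proof is correct and takes essentially the same route as the paper's sketch: both argue by induction that after $s$ steps every $j$-set whose ``defect'' $|J\setminus V(\cA_0)|$ equals $s$ is infected, driven by the same count $|W\setminus J| = (j+r-1)-(j-s) = r+s-1 \ge r$. You have simply phrased the invariant directly in terms of individual $j$-sets and extension counts via Claim~\ref{claim:whenpairgetsinfected}, whereas the paper states it in the equivalent language of joker $(j-i)$-sets within $V(\cA_0)$.
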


\begin{proof}[Proof sketch]
It is easy to show inductively that for $i \in [j]$, all $(j-i)$-sets within $V(\cA_0)$ are contained in
at least $j+r-1 - (j-i) \ge r$ joker $(j-i+1)$-sets for $\cA_{i-1}$, and therefore become joker $(j-i)$-sets for $\cA_i$.
The case $i=j$ states that the empty set is a joker, i.e., all $j$-sets are infected.
\end{proof}

In particular, observe that if $k-1=j \gg r$, then $\ell_n(k,j,r) \le \binom{j+r-1}{j} \le j^r \ll r^{k-1}$.
This shows that in general $\ell_n(k,k-1,r)$ cannot be a polynomial in $r$ of degree $k-1$.
Thus Theorem~\ref{thm:maintheorem}\ref{item:largej} represents the first step
towards determining $\ell_n(k,k-1,r)$, and
it would be interesting
to investigate the correct behaviour of this function.

We conclude with a concrete example in the case where $k=4$ and $j=3$, and where $r=3$.
We have by Theorem~\ref{thm:maintheorem} that $\ell_n(4,3,3)\le\ell_n(3,2,1)+\ell_n(3,2,2)+\ell_n(3,2,3)=7$
(while Claim~\ref{claim:counterexample} only gives $\ell_n(4,3,3)\le 10$).
However, this is not the correct value, as the example in Figure~\ref{fig:counterexample} shows.
\vspace{0.2cm}
\begin{center}
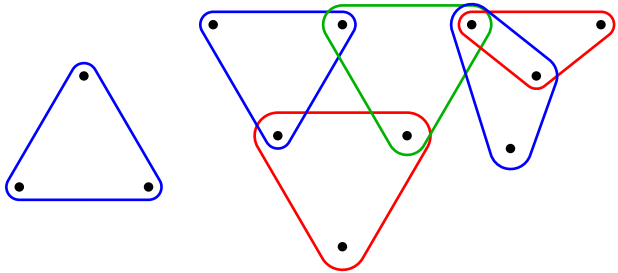
\begin{figure}[h]
\captionsetup{justification=centering,margin=2cm}
\begin{tikzpicture}[scale=1.7]

\vertex{.005cm}{black} at (2.5,-0.866) (25u) {};
\vertex{.005cm}{black} at (0.5,0.466) (05m) {};
\vertex{.005cm}{black} at (1.5,0.866) (15m) {};
\vertex{.005cm}{black} at (2.5,0.866) (v2) {};
\vertex{.005cm}{black} at (3.5,0.866) (35m) {};
\vertex{.005cm}{black} at (4.5,0.866) (y) {};
\vertex{.005cm}{black} at (0,-0.4) (00) {};
\vertex{.005cm}{black} at (1,-0.4) (10) {};
\vertex{.005cm}{black} at (2,0) (v1) {};
\vertex{.005cm}{black} at (3,0) (v3) {};
\vertex{.005cm}{black} at (4,0.466) (v4) {};
\vertex{.005cm}{black} at (3.8,-0.1) (v5) {};

\drawThreeHyperedge{v1}{v3}{25u}{red}{.18cm};
\drawThreeHyperedge{15m}{v2}{v1}{blue}{.1cm};
\drawThreeHyperedge{v2}{35m}{v3}{darkgreen}{.15cm};

\drawThreeHyperedge{35m}{y}{v4}{red}{.1cm};

\drawThreeHyperedge{00}{05m}{10}{blue}{.1cm};
\drawThreeHyperedge{35m}{v4}{v5}{blue}{.16cm};
    
%
 \end{tikzpicture}
 \caption{A $3$-configuration with $6$ edges.}\label{fig:counterexample}
 \end{figure}
\end{center}

One can check that this $3$-configuration does indeed percolate in $\cK_n^4$ with infection threshold $3$,
and also that there is no percolating configuration containing only $5$ triples, so in fact $\ell_n(4,3,3)=6$ -- we omit the proof.

\bibliographystyle{plain}
\bibliography{References}
\end{document}